\documentclass[12pt]{amsart}
\usepackage[colorlinks=true, pdfstartview=FitV, linkcolor=blue, citecolor=blue, urlcolor=blue]{hyperref}
\usepackage{amssymb,amscd}
\textwidth=1.35\textwidth
\textheight=1.22\textheight
\calclayout 

\def\quot#1#2{#1/\!\!/#2}

\def\C{\mathbb {C}}

\def\A{\mathfrak {A}}

\def\NN{\mathcal N}
\def\OO{\mathcal O}

\def\NN{\mathcal N}

\def\Z{\mathbb Z}

\def\SL{\operatorname{SL}}

\def\inv{^{-1}}
\def\lie#1{{\mathfrak #1}}
\def\lieg{\lie g}

\def\phi{{\varphi}}

\def\O{\mathcal O}

\def\codim{{\operatorname{codim}}}
\def\pr{{\operatorname{pr}}}

\def\Aut{\operatorname{Aut}}

\def\alg{{\operatorname{alg}}}

\def\ql{{\operatorname{q\ell }}}
\renewcommand{\sl}{{\operatorname{/\!\!/}}}

\def\zd{\Z/d\Z}

\numberwithin{equation}{subsection}

\newtheorem{theorem}[subsection]{Theorem}
\newtheorem{lemma}[subsection]{Lemma}
\newtheorem{proposition}[subsection]{Proposition}
\newtheorem{corollary}[subsection]{Corollary}

\theoremstyle{definition}

\theoremstyle{remark}

\newtheorem{remark}[subsection]{Remark}

\newtheorem{example}[subsection]{Example}

\begin{document}
\title{A characterization of linearizability for holomorphic  $\C^*$-actions}
\author{Frank Kutzschebauch and Gerald W.~Schwarz}

\address{Frank Kutzschebauch, Institute of Mathematics, University of Bern, Sidlerstrasse 5, CH-3012 Bern, Switzerland}
\email{frank.kutzschebauch@math.unibe.ch}

\address{Gerald W.~Schwarz, Department of Mathematics, Brandeis University, Waltham MA 02454-9110, USA}
\email{schwarz@brandeis.edu}

\thanks{F.~Kutzschebauch was supported by Schweizerischer Nationalfonds grant 200021-116165.  G.~W.~Schwarz thanks the University of Bern for hospitality and financial support.} 

\subjclass[2010]{Primary 32M05.  Secondary 14L30, 32M17, 32Q28.}   

\keywords{Linearization problem, Stein manifold, categorical quotient, Luna stratification.}

\begin{abstract}  
Let $G$ be a reductive complex Lie group acting holomorphically on  $X=\C^n$. The (holomorphic) Linearization Problem asks if there is a holomorphic change of coordinates on $\C^n$ such that the $G$-action becomes linear.  Equivalently,  is there  a $G$-equivariant  biholomorphism $\Phi\colon X\to V$ where $V$ is a $G$-module? 
There is an intrinsic stratification of the categorical quotient $\quot XG$, called the Luna stratification, where the strata are labeled by   isomorphism classes of representations of reductive subgroups of $G$. Suppose that there is a $\Phi$ as above. Then $\Phi$ induces a biholomorphism $\phi\colon \quot XG\to \quot VG$ which is stratified, i.e., the stratum of $\quot XG$ with a given label is sent isomorphically to the stratum of $\quot VG$ with the same label.

  The counterexamples to the Linearization Problem   construct an action of $G$ such that $\quot XG$ is not stratified biholomorphic to any $\quot VG$. Our main theorem shows that, for a reductive group $G$ with $\dim G\leq 1$, the existence of  a stratified biholomorphism of $\quot XG$ to some $\quot VG$ is not only necessary but also \emph{sufficient\/} for linearization. In fact, we do not have to assume that $X$ is biholomorphic to $\C^n$, only that $X$ is a Stein manifold. 
\end{abstract}

\maketitle
\section{Introduction}
The problem of linearizing the action  of a reductive group $G$ on $\C^n$ has attracted much attention both in the algebraic and holomorphic settings (\cite{Huckleberry}, \cite{KorasRussell}, \cite{ Kraft1996},\cite{Ku}). The first results were obtained   in the algebraic category. If $X$ is an affine $G$-variety, then the quotient is the affine variety $\quot XG$ with coordinate ring $\OO_\alg(X)^G$. 
An early high point is a consequence of Luna's slice theorem \cite{Luna}. Suppose that $\quot XG$ is a point and that $X$ is smooth and contractible. Then $X$ is algebraically $G$-isomorphic to a $G$-module. The structure theorem for the group of algebraic automorphisms of $\C^2$ shows that any action on $\C^2$ is linearizable \cite[Section 5]{ Kraft1996}.  As a consequence of a long series of results by many people, it was finally shown in \cite{KraftRussell} that an effective action of a positive dimensional $G$ on $\C^3$ is linearizable. The case of finite groups acting on $\C^3$ remains open.

The first counterexamples to the Algebraic Linearization Problem were constructed by Schwarz \cite{Schwarz1989} for $n\geq 4$.  His examples came from negative solutions to the equivariant Serre problem, i.e.,   there are algebraic $G$-vector bundles with base a $G$-module which are not  isomorphic to the trivial ones (those of the form $\pr\colon W\times W'\to W$ where $G$ acts diagonally on the $G$-modules $W$ and $W'$). It is interesting to note that in these counterexamples, the nonlinearizable actions may have the same stratified quotient  as a $G$-module. 

By the equivariant Oka principle of Heinzner and Kutzschebauch \cite{Heinzner-Kutzschebauch}, all holomorphic $G$-vector bundles over a $G$-module are trivial. Thus the algebraic counterexamples to linearization are not counterexamples in the holomorphic category. But Derksen and Kutzsche\-bauch \cite{Derksen-Kutzschebauch} showed that for $G$ nontrivial, there is an $N_G\in\mathbb N$ such that there are nonlinearizable holomorphic actions of $G$ on $\C^n$, for every $n\geq N_G$.  

Before saying more about the examples of \cite{Derksen-Kutzschebauch} we consider a more general problem. Let  $X$ and $Y$ be Stein $G$-manifolds. There are the categorical quotients $\quot XG$ and $\quot YG$. We have the Luna stratifications of $\quot XG$ and $\quot YG$ labeled by isomorphism classes of representations of reductive subgroups of $G$ (see Section \ref{sec:background}). We say that a biholomorphism  $\phi\colon \quot XG\to \quot YG$ is \emph{stratified\/} if it sends every  Luna stratum of $\quot XG$ with a given label to the Luna stratum of $\quot YG$ with the same label.  If $\Phi\colon X\to Y$ is a $G$-biholomorphism, then the induced mapping $\phi\colon \quot XG\to \quot YG$ is stratified. Given a stratified biholomorphism $\phi\colon \quot XG\to \quot YG$, it seems  difficult to find minimal criteria for there to exist a $G$-biholomorphism $\Phi\colon X\to Y$. Positive results so far  require   the existence of a $G$-equivariant diffeomorphism of $X$ and $Y$ or a special type  of  $G$-equivariant homeomorphism of $X$ and $Y$.   See \cite{KLS,KLSOka} for details.  

The counterexamples of Derksen and Kutzschebauch to linearization are $G$-actions on $X=\C^n$ whose   quotients are not isomorphic, via a stratified biholomorphism,  to any quotient $\quot VG$ where $V$ is a $G$-module.  However, 
jointly with F. L\'arusson, we have shown in \cite{KLSb} that, under a mild assumption (largeness, see \S \ref{sec:background}), this is  the only way to get a counterexample to linearization. To formulate this result let us recall some notions from \cite{KLSb}.
Suppose that we have a stratified biholomorphism $\phi\colon \quot XG\to \quot VG$ where $V$ is a $G$-module. Then we may identify $\quot XG$ and $\quot VG$ and call the common quotient $Z$. We have quotient mappings $\pi_X\colon X\to Z$ and $\pi_V\colon V\to Z$. Assume there is an open cover $\{U_i\}_{i\in I}$ of $Z$ and $G$-equivariant biholomorphisms $\Phi_i:\pi_X^{-1}(U_i)\to \pi_V^{-1}(U_i)$ over $U_i$ (meaning that $\Phi_i$ descends to the identity map of $U_i$).  We express the assumption by saying that $X$ and $V$ are \textit{locally $G$-biholomorphic over a common quotient}.   
Equivalently, our original $\phi\colon \quot XG\to \quot VG$ locally lifts to $G$-biholomorphisms of $X$ to $V$. From \cite{KLSb} we have:

\begin{theorem}
Suppose that $X$ is a Stein $G$-manifold, $V$ is a $G$-module and $X$ and $V$ are locally $G$-biholomorphic over a common quotient. Then $X$ and $V$ are $G$-biholomorphic.
\end{theorem}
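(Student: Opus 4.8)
The plan is to recast the statement as the triviality of a $1$-cocycle on the Stein quotient and to split that cocycle by an equivariant, stratified refinement of the Oka--Grauert method, building on the Heinzner--Kutzschebauch equivariant Oka principle.

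First I would pass to the cohomological formulation. Write $Z$ for the common quotient, with categorical quotient maps $\pi_X\colon X\to Z$ and $\pi_V\colon V\to Z$, and let $\G$ be the sheaf of groups on $Z$ whose sections over an open $U$ are the $G$-biholomorphisms of $\pi_V^{-1}(U)$ onto itself lying over $\id_U$. The hypothesis supplies a cover $\{U_i\}$ and $G$-biholomorphisms $\Phi_i\colon\pi_X^{-1}(U_i)\to\pi_V^{-1}(U_i)$ over $U_i$; on overlaps the maps $g_{ij}:=\Phi_i\circ\Phi_j^{-1}$ lie in $\G(U_i\cap U_j)$ and form a $1$-cocycle. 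A global $G$-biholomorphism $X\to V$ exists exactly when this cocycle is a coboundary, $g_{ij}=a_i^{-1}a_j$ with $a_i\in\G(U_i)$: then the maps $a_i\circ\Phi_i$ agree on overlaps and glue. Thus everything reduces to the vanishing of $[\{g_{ij}\}]\in H^1(Z,\G)$.

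Next I would treat the linearized problem, which is the infinitesimal, non-abelian analogue of the statement that holomorphic $G$-vector bundles over a module are trivial. The sheaf of Lie algebras of $\G$ is the sheaf $\Theta$ on $Z$ of $G$-invariant holomorphic vector fields on $V$ tangent to the fibers of $\pi_V$ (i.e.\ projecting to $0$ on $Z$); by finiteness of invariants this is a coherent $\OO_Z$-module. Since $Z=\quot VG$ is Stein, Cartan's Theorem~B gives $H^1(Z,\Theta)=0$, and more precisely over a Cartan pair $(A,B)$ in $Z$ one can solve the additive problem $\xi=\xi_A-\xi_B$ with estimates for $\xi\in\Theta(A\cap B)$. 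Taking the time-one flow of such a field (which is $G$-equivariant and fiber-preserving, hence a section of $\G$) converts this into the multiplicative Cartan decomposition $g=a_A\,a_B^{-1}$ for sections $g\in\G(A\cap B)$ sufficiently close to $\id$, by the usual convergent iteration.

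Finally I would globalize: exhaust $Z$ by relatively compact Stein subsets and merge the $\Phi_i$ two sets at a time along Cartan pairs, as in Cartan's proof of Theorem~B for matrix cocycles, using a normal-family argument for convergence. Reducing each $g_{ij}$ to the near-identity regime needed above is where the Oka principle enters: one must realize holomorphically the fact that, topologically, local biholomorphisms over $Z$ can be interpolated. I expect the main obstacle to be the Luna stratification. The fiber $\pi_V^{-1}(z)$ changes type across strata --- the isotropy groups and slice representations jump --- so the local model for $\G$ (obtained by applying Luna's slice theorem fiberwise), the coherence and splitting estimates for $\Theta$, and the convergence of the exponentiated flows must all be controlled uniformly as one crosses from a stratum to its closure. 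Establishing these uniform stratified models, and with them an equivariant Oka principle for $\G$ strong enough to pass from the continuous to the holomorphic splitting, is the technical heart of the argument; this is precisely the point where the linear Heinzner--Kutzschebauch theorem must be upgraded to a non-linear, stratified statement.
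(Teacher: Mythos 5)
Your Čech formulation is the right starting point (the cocycle $g_{ij}=\Phi_i\circ\Phi_j^{-1}$ with values in the sheaf $\G$ of $G$-automorphisms over $Z$, which splits as $g_{ij}=a_i^{-1}a_j$ exactly when a global lift exists), and the additive step --- coherence of the sheaf of vertical invariant vector fields as an $\OO_Z$-module and the vanishing of $H^1$ by Cartan's Theorem B on the Stein space $Z$ --- is genuinely part of the true argument. But beyond that there are two real gaps. First, ``taking the time-one flow'' of a vertical invariant field is not legitimate: the fibers of $\pi_V$ are non-compact affine varieties, a holomorphic field tangent to them need not be complete, and there is no fiberwise sup-norm with which to run the ``usual convergent iteration''; making the Grauert--Cartan scheme work for this infinite-dimensional, non-locally-constant sheaf of groups is not a routine adaptation --- it is essentially the content of the separate paper \cite{KLS}, which you implicitly assume when you appeal to a ``stratified equivariant Oka principle''. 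Second, and more fundamentally, even granting that Oka principle in full strength, your argument only reduces the holomorphic splitting of $(g_{ij})$ to a topological/continuous splitting, and you never prove the latter; you only remark that it ``must be realized''. That is precisely the point where the hypothesis that $V$ is a $G$-\emph{module}, rather than an arbitrary Stein $G$-manifold, has to enter, and it enters nowhere in your proposal. So the proposal cannot close as written.

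For comparison: the statement is quoted by this paper from \cite{KLSb} (Theorem 1 there) rather than reproved, and its proof pivots on exactly the ingredient you are missing, namely the scalar $\C^*$-action of the module. One pushes the Euler vector field $E$ of $V$ down to a strata-preserving field on $Z$, lifts it to a global $G$-invariant field $A\in\A(X)^G$ --- by the same Cartan-B gluing you describe, applied to the local lifts $\Phi_i^*E$, whose differences are vertical --- and then uses the flows of $A$ on $X$ and of $E$ on $V$ (i.e.\ the contraction of $Z$ to the image of the origin) to propagate a single local lift, defined near the fiber over $\pi_V(0)$, to a global $G$-biholomorphism $X\to V$. This same mechanism is reused in the present paper at the end of the proof of Theorem \ref{thm:last.case}, where a global invariant lift of $\pi_*(E)$ is constructed and ``the argument for the proof of Theorem 1 of \cite{KLSb}'' is invoked. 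In short: keep your linearized/additive step, but replace the nonabelian iteration and the deferred Oka principle by the Euler-field contraction argument; without some use of the module structure of $V$ the theorem is out of reach by your route.
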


In fact, the $G$-biholomorphism induces the identity on the common quotient.

\begin{theorem}\label{thm:main.old}
Suppose that $X$ is a Stein $G$-manifold and $V$ is a $G$-module satisfying the following conditions.
\begin{enumerate}
\item There is a stratified biholomorphism $\phi$ from $\quot XG$ to $\quot VG$.
\item  $V$  (equivalently, $X$) is large.
\end{enumerate}
Then, by perhaps changing $\phi$, one can arrange that $X$ and $V$ are locally $G$-biholomorphic over $\quot XG\simeq \quot VG$, hence $\phi$ lifts to a $G$-equivariant biholomorphism $\Phi\colon X\to V$.
\end{theorem}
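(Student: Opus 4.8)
The plan is to deduce the theorem from the preceding theorem, which upgrades local $G$-biholomorphy over a common quotient to a global $G$-biholomorphism. Writing $Z$ for the common quotient obtained by identifying $\quot XG$ with $\quot VG$ via $\phi$, the task reduces to producing (after perhaps replacing $\phi$) an open cover $\{U_i\}$ of $Z$ together with $G$-biholomorphisms $\Phi_i\colon\pi_X^{-1}(U_i)\to\pi_V^{-1}(U_i)$ that descend to the identity on $U_i$. Once such local lifts exist, the preceding theorem yields the global $\Phi$ and its descent to the common quotient, so all the real content lies in the local lifting.

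To build the local lifts I would work near an arbitrary point $z\in Z$. Choose a point of the unique closed $G$-orbit in $\pi_X^{-1}(z)$, with reductive stabilizer $H$; by the holomorphic slice theorem there is a Stein neighborhood $U$ of $z$ and a $G$-biholomorphism $\pi_X^{-1}(U)\cong G\times_H S_X$, where $S_X$ is a Stein $H$-manifold with an $H$-fixed base point and $\quot{S_X}H\cong U$. The analogous statement holds for $V$, and because $V$ is a $G$-module the slice $S_V$ may be taken to be an $H$-stable neighborhood of $0$ in the slice representation $N$. Since $\phi$ is stratified, the label $(H,N)$ attached to $z$ is the same for $X$ and for $V$: the stabilizers agree and the slice representations are isomorphic $H$-modules. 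Linearizing the $H$-action on $S_X$ near its fixed point (first linearize the action of a maximal compact subgroup, then extend holomorphically to $H$) identifies $S_X$, near the base point, $H$-equivariantly with an $H$-stable neighborhood of $0$ in its tangent representation, which is isomorphic to $N$. Composing, I obtain over a possibly smaller neighborhood of $z$ an $H$-biholomorphism $S_X\to S_V$ and hence, via the functor $G\times_H(-)$, a local $G$-biholomorphism $\pi_X^{-1}(U)\to\pi_V^{-1}(U)$.

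The difficulty---and the reason $\phi$ may have to be changed---is that such a local lift descends not to $\phi|_U$ but to some stratified self-biholomorphism $\psi_U$ of $U$: the linearization of $S_X$ and the slice identification $\quot{S_X}H\cong U$ compose to a nontrivial automorphism of the local quotient. Running this over a cover produces local lifts whose induced maps $\psi_i$ need not agree on overlaps, and on $U_i\cap U_j$ the discrepancies $\psi_i\psi_j^{-1}$ are stratified self-biholomorphisms of pieces of $Z$ that are realized by $G$-biholomorphisms of the corresponding pieces of $V$. Thus the local lifts define a cocycle valued in the sheaf of \emph{liftable} stratified automorphisms of $Z$, and to obtain genuine lifts of a single identification one must trivialize this cocycle; equivalently, one must absorb the $\psi_i$ into one global stratified biholomorphism, which becomes the modified $\phi$. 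This is the main obstacle, and it is exactly here that largeness is used: it guarantees enough flexibility in the sheaf of $G$-biholomorphisms of $V$ over $Z$ (a vanishing of the relevant obstruction, of Oka--Grauert type) to split the cocycle. Having done so, the corrected maps descend to the common identification, the hypotheses of the preceding theorem are met, and the global $G$-biholomorphism $\Phi$ follows.
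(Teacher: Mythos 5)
Your opening reduction is the right one, and it matches how this result is actually established. (Note that the paper does not prove Theorem \ref{thm:main.old} itself: it is quoted from \cite{KLSb}, and what the paper recalls of that proof is the machinery of Section \ref{sec:ILP.DP} --- the ILP, the DP, and Theorem \ref{thm:easy}.) One must produce, after correcting $\phi$, local $G$-biholomorphisms over a common quotient, and then the preceding theorem yields the global $\Phi$. Your slice-theorem step is also sound: since $\phi$ is stratified, the label $(H,N)$ at $z$ is the same on both sides, so suitable saturated neighborhoods $\pi_X\inv(U_i)$ and $\pi_V\inv(\phi(U_i))$ are both $G$-biholomorphic to saturated neighborhoods of the zero section of $G\times_H N$ (the separate Bochner-type linearization of $S_X$ is unnecessary, as the holomorphic slice theorem already produces a linear slice). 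And you correctly identify the resulting difficulty: these local lifts descend to maps $\psi_i$ of quotients that need not agree with $\phi$ or with one another.

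The gap is the step that is supposed to repair this. You propose to view $(\psi_i\psi_j\inv)$ as a cocycle in the ``sheaf of liftable stratified automorphisms of $Z$'' and to split it by ``a vanishing of the relevant obstruction, of Oka--Grauert type'' guaranteed by largeness. No such vanishing theorem exists and you supply no mechanism for one: this is a non-abelian sheaf of automorphism germs of a singular space, not the sections of a coherent sheaf or of a bundle with homogeneous fibers, so neither Cartan's Theorem B nor Grauert's Oka principle has any purchase on it. Worse, largeness cannot enter only at that point: your cocycle is liftable-valued by construction (a lift of $\psi_i\psi_j\inv$ is $\Phi_i\circ\Phi_j\inv$) whether or not $V$ is large, so if such a splitting principle existed, your argument would prove the theorem with hypothesis (2) deleted --- precisely the general linearization statement that this paper and its predecessors can reach only in special cases ($\dim\quot VG=1$, $G=\SL_2(\C)$, $\dim G\leq 1$) and with substantial effort. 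In the actual proof, largeness is used at the infinitesimal rather than the cohomological level: by Schwarz's lifting results (the reason largeness is called the key technical condition in Section \ref{sec:background}), large modules have the ILP, i.e.\ every strata-preserving holomorphic vector field on $Z$ lifts to a $G$-invariant field on $V$; gluing local lifts of a \emph{fixed} vector field is an abelian problem (differences of lifts are $\pi$-vertical), killed by Theorem B on the Stein quotient; and the equivariant lift of $\phi$ is then built by a flow/degeneration argument from the lifted pushdown of the Euler vector field, exactly as in the proof of Theorem \ref{thm:last.case}. The DP is what makes this limit work, and it is also the source of the phrase ``by perhaps changing $\phi$'': the correction is by a single quasi-linear automorphism $\theta_0=\lim_{t\to 0}t\inv\circ\theta\circ t\in\Aut_\ql(Z)$, not by a \v{C}ech-style readjustment over a cover. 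The hard analytic content --- which vector fields and automorphisms of $Z$ actually lift --- is exactly what your proposal black-boxes.
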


In \cite[Sec.\ 5]{KLSb} we establish the following:
\begin{theorem}
Let $X$ be a Stein $G$-manifold, $V$ a $G$-module and $\phi\colon \quot XG\to \quot VG$ a stratified biholomorphism. Suppose that
\begin{enumerate}
\item $\dim \quot VG=1$ $($\cite{Jiang}$)$ or
\item $G=\SL_2(\C)$.
\end{enumerate}
Then the conclusion of Theorem \ref{thm:main.old} holds.
\end{theorem}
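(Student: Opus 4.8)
The plan is to reduce everything to the \emph{local} statement that, after possibly modifying $\phi$, the spaces $X$ and $V$ are $G$-biholomorphic over the identity on some open cover of the common quotient $Z:=\quot XG\simeq\quot VG$; the global $\Phi$ then follows from the globalization result stated above (local $G$-biholomorphy over a common quotient forces a global $G$-biholomorphism). If $V$ is large this local statement is exactly the conclusion of Theorem~\ref{thm:main.old}, so I may assume throughout that $V$ (equivalently $X$) is \emph{not} large: the whole task is to remove the largeness hypothesis in the two listed situations. After splitting off the trivial summand $V^G$, which contributes a linear factor to $Z$ that is matched trivially, I may also assume $V^G=0$, so that $0$ is the unique $G$-fixed point and the unique vertex of the cone $Z$ for the $\C^*$-action coming from the grading of $\C[V]^G$.

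The structural engine is Luna's slice theorem in the Stein category together with the stratified hypothesis on $\phi$. A saturated Stein neighborhood $\pi_X^{-1}(U)$ of the fiber over a closed orbit with isotropy $H$ is $G$-biholomorphic to a neighborhood of the zero section in $G\times_H N$, with $N$ the slice representation and $\quot{(G\times_H N)}G=\quot NH$, and similarly for $V$ with some $N'$. Because $\phi$ preserves Luna labels, the slice representations on corresponding strata agree, $N\cong N'$ as $H$-modules. At every stratum \emph{other than the vertex} the isotropy $H$ is a proper reductive subgroup of $G$ and $N$ is a strictly smaller module, so the local lifting problem there is an instance of the same problem for the smaller data $(H,N)$ and can be fed into an induction. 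All the difficulty concentrates at the vertex $0$, where the isotropy is $G$, the slice representation is $V$ itself, and the slice theorem gives no reduction.

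In case (1) the cone $Z$ is one-dimensional, hence $Z\cong\C$, and the $\C^*$-invariance of the Luna stratification forces it to consist of just the principal stratum $\C\setminus\{0\}$ and the vertex $\{0\}$. Over $\C\setminus\{0\}$ both $\pi_X$ and $\pi_V$ are locally trivial $G$-fibrations with homogeneous fiber $G/H$, and the equivariant Oka principle of Heinzner--Kutzschebauch matches them over the identity. Everything then reduces to extending this match across the vertex, which is exactly the content of Jiang's analysis \cite{Jiang} of modules with one-dimensional quotient: his classification pins down the possible $V$ and permits the extension to be made explicitly. In case (2) I would instead use that every $\SL_2(\C)$-module is a sum of irreducibles $\Sym^d(\C^2)$, so that largeness and the principal isotropy are combinatorial in the multiset of the $d$'s; when $V$ is not large only a short explicit list of modules remains, the non-vertex isotropy groups are tori or finite binary-polyhedral subgroups whose smaller slice representations are handled by the induction of the previous paragraph (and the one-dimensional-quotient case), and the vertex is treated directly using the classical invariant theory of binary forms.

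The single genuine obstacle, in both cases, is the extension across the vertex, where largeness fails and the codimension-two cohomological vanishing behind Theorem~\ref{thm:main.old} is no longer available. Here the $G$-module identification $N\cong N'=V$ furnished by the stratified condition (equivalently, a $G$-fixed point of $X$ whose slice representation is $V$) gives, via the slice theorem, only a germ of a $G$-biholomorphism; this germ must be promoted to a saturated $G$-biholomorphism over the identity on a full neighborhood of $0$ in $Z$ that agrees with the fibration isomorphism already built over $\C\setminus\{0\}$ (resp.\ over the lower strata). The freedom to modify $\phi$ near the vertex is what creates room to align these, after which the globalization result yields the desired $\Phi$.
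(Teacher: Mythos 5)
Your overall frame---reduce to finding, after one global change of $\phi$, local $G$-biholomorphisms over the identity of the common quotient, then invoke the globalization theorem---is sound, and you correctly isolate the difficulty at the image of the origin. But precisely there the proposal stops being a proof. The sentences ``his classification pins down the possible $V$ and permits the extension to be made explicitly'' and ``the vertex is treated directly using the classical invariant theory of binary forms'' are placeholders for the entire content of the theorem. In particular you misread the role of \cite{Jiang}: Jiang's result is not a classification of modules with one-dimensional quotient to be combined with an ad hoc construction; it \emph{is} the lifting theorem at the vertex (strata preserving automorphisms/vector fields of the one-dimensional quotient lift to $V$), i.e.\ it is exactly the statement your sketch still owes, and for $G=\SL_2(\C)$ you owe the analogous statement with no indication of how binary invariant theory produces it. A second, related gap: even granted a lift near the vertex, it must be reconciled with the isomorphism already built over the punctured quotient, and your only freedom is to compose $\phi$ with an automorphism of $\quot VG$; you give no reason why the discrepancy is realizable by such a change. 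In the paper's framework this is what the deformation property (DP) supplies: for $\theta\in\Aut(Z)$ the family $\theta_t=t\inv\circ\theta\circ t$ converges to $\theta_0\in\Aut_\ql(Z)$, and one corrects $\phi$ by $\theta_0$. Without an argument for the lifting property at the vertex and for the DP, the proposal assumes what is to be proved.

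For comparison, the proof the paper relies on (Section 5 of \cite{KLSb}, whose machinery is reproduced in Section \ref{sec:ILP.DP}) runs through Theorem \ref{thm:easy}: one verifies that $V$ has the ILP and the DP, and then any stratified $\phi$ lifts after modification by an element of $\Aut_\ql(Z)$. Case (1) is precisely Jiang's lifting theorem; case (2) combines Theorem \ref{thm:main.old} with the finiteness of non-large $\SL_2(\C)$-modules (Theorem \ref{thm:largeness}(2)) and a verification of the lifting properties for the remaining finite list. Note also two inaccuracies in the part of your sketch away from the vertex. First, the fibers of $\pi_V$ over principal points are $G\times_H\NN(N_1)$, where $\NN(N_1)$ is the null cone of the nontrivial part of the slice representation, so they are homogeneous spaces $G/H$ only when the action is stable; and even in the stable case the two $G/H$-bundles over $Z_\pr$ (which for $Z\cong\C$ retracts to a circle) need not be isomorphic for free, since $\pi_0$ of $N_G(H)/H$ can be nontrivial, so ``the equivariant Oka principle matches them over the identity'' requires an argument using that both bundles extend across the vertex. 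Second, splitting off $V^G$ is not ``matched trivially'': $\quot XG$ carries no given product decomposition, and $X$ itself certainly does not, so the reduction to $V^G=0$ needs justification (the paper instead carries $V^G$ along as holomorphic parameters).
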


The main point of this paper is to show that  we can replace (1) or (2) above by the condition $\dim G\leq 1$.  

\begin{theorem}\label{thm:main} Let $G$ be reductive with $\dim G\leq 1$. Let  $X$ be a Stein $G$-manifold, let $V$ be a $G$-module and let   
 $\phi\colon \quot XG\to \quot  VG$ be  a stratified biholomorphism. Then, after perhaps changing $\phi$ by an automorphism of $\quot VG$, there is a biholomorphic $G$-equivariant lift $\Phi$ of $\phi$  from $X$ to $V$.
\end{theorem}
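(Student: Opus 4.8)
The plan is to reduce everything to the production of local $G$-biholomorphisms over a common quotient, after which the local-to-global theorem recalled above (local $G$-biholomorphy over a common quotient implies global $G$-biholomorphy, with \emph{no} largeness hypothesis) finishes the argument. Thus, identifying $Z=\quot XG\cong\quot VG$ via $\phi$, the entire task is to arrange, after modifying $\phi$ by an automorphism of $Z$, that $X$ and $V$ become locally $G$-biholomorphic over $Z$. Theorem \ref{thm:main.old} already does this whenever $V$ is large, so the new content is precisely the non-large case. After reducing to an effective action (so the principal isotropy is trivial), I note that $\dim G\le 1$ forces $G^0$ to be either trivial or $\C^*$. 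I first dispose of the low-dimensional quotients: $\dim Z=0$ by Luna's slice theorem, and $\dim Z=1$ by the one-dimensional case recalled above. This reduces to $\dim Z\ge 2$, where the only way largeness can fail is through strata of codimension one in $Z$; away from these the non-principal locus has codimension $\ge 2$ and the Hartogs-type extension used in the proof of Theorem \ref{thm:main.old} applies verbatim.

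Next I analyze the local model along a codimension-one stratum. By the holomorphic (Luna-type) slice theorem, near corresponding points $X$ and $V$ are $G$-biholomorphic to associated bundles $G\times_H S$, where $H$ is the stabilizer of the closed orbit and $S$ the slice representation. Because $\dim G\le 1$, $H$ is either a finite cyclic group $\mu_d\subset\C^*$ or all of $G^0=\C^*$ (with a finite part). Codimension one forces the slice to be minimal transverse to the stratum: in the finite case $S=S^H\oplus\C_\chi$ with a single \emph{faithful} character $\chi$, so $\quot SH\cong S^H\times(\quot{\C_\chi}{\mu_d})\cong S^H\times\C$ is smooth and the stratum is the hypersurface $S^H\times\{0\}$; in the torus case $S$ contains exactly one hyperbolic pair $\C_a\oplus\C_{-b}$ beyond its fixed part, whence $\quot S{\C^*}\cong S^{\C^*}\times(\quot{(\C_a\oplus\C_{-b})}{\C^*})\cong S^{\C^*}\times\C$, again smooth with the stratum a smooth hypersurface. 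The crucial point is that the Luna label carried by $\phi$ records the isomorphism type of $S$ as an $H$-module, so the local models on the $X$-side and the $V$-side coincide; in particular, \emph{transverse} to the stratum the categorical quotient is one-dimensional.

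I would then build the local lifts by combining this transverse one-dimensionality with propagation along the stratum. Concretely, the one-dimensional result recalled above provides, in a parametrized (relative) form, $G$-equivariant biholomorphisms of the transverse slices depending holomorphically on the point of the stratum; spreading these along the smooth hypersurface yields a $G$-biholomorphism $\pi_X^{-1}(U)\to\pi_V^{-1}(U)$ over a neighborhood $U$ of each codimension-one point. Such a lift descends to some automorphism of $U$, and I would absorb the discrepancy with $\phi$ into an automorphism of $Z$: near a smooth hypersurface stratum with the above normal forms the stratum-preserving quotient automorphisms that lift are flexible enough to realize the germ of $\phi$, so after precomposing $\phi$ with a \emph{single} global automorphism of $\quot VG$ the map becomes locally liftable at every codimension-one point. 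Combined with the codimension $\ge 2$ strata handled as in Theorem \ref{thm:main.old}, this makes $X$ and $V$ locally $G$-biholomorphic over $Z$, and the local-to-global theorem produces the global lift $\Phi$.

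The main obstacle is global consistency. The transverse normal forms, the characters $\chi$ (resp.\ the hyperbolic pairs), and the attendant local quotient automorphisms must match on overlaps and across the intersections of the codimension-one strata with the deeper (codimension $\ge 2$) strata, and the many local adjustments of $\phi$ must all be realizable by one automorphism of $\quot VG$. This is in effect a cohomological gluing problem over the stratified quotient. The reason it is tractable here, and the reason $\dim G\le 1$ is used so heavily, is that the non-principal isotropy groups are only finite cyclic groups or rank-one tori, the local quotients along codimension-one strata are smooth, and transverse to them the quotient is one-dimensional, so that the delicate construction of Theorem \ref{thm:main.old} need only be supplemented by the already-established one-dimensional case.
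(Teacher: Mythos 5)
Your reduction rests on the claim that, once $\dim Z\ge 2$, largeness of $V$ can fail only through codimension-one strata of $Z$, so that away from such strata the proof of Theorem \ref{thm:main.old} ``applies verbatim''. This is false: it conflates codimension in the quotient with codimension in $V$. Take $G=\C^*$ acting with weights $(1,-1,-1)$, i.e.\ $V=\C_{1}\oplus\C_{-1}^{2}\oplus V^G$ with coordinates $x,z_1,z_2$ on the nontrivial part. The invariants are generated by $xz_1,xz_2$, so $\quot VG\simeq\C^2\times V^G$ is smooth and its only non-principal stratum, the image of the fixed-point set, is $\{0\}\times V^G$, of codimension two: there are no codimension-one strata at all. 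Nevertheless $\pi\inv(\{0\}\times V^G)=(\{x=0\}\cup\{z_1=z_2=0\})\times V^G$ has a component of codimension one in $V$, so $V$ is only $1$-principal, hence not large; Theorem \ref{thm:main.old} does not apply, and its proof cannot be run, because $k$-principality and $k$-modularity are codimension conditions upstairs in $V$, not conditions on the strata of $Z$. This configuration --- exactly one weight of one sign and at least two of the other --- is precisely what the paper isolates as the hard case, Theorem \ref{thm:last.case}, and it consumes all of Section \ref{sec:hard}: the identification $\quot VG\simeq V'/H$ of Lemma \ref{lem:eval.x0}, the local lifting Propositions \ref{prop:local.lift} and \ref{prop:local.lift.G0}, the module $M$ of rational covariants with its good and bad monomials, the correcting de Jonqui\`ere automorphisms, and an Euler-vector-field integration argument. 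Your proposal contains no argument covering this case.

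Even in the situation your plan does address (genuine codimension-one strata), the two decisive steps are asserted rather than proved. First, you invoke a ``parametrized (relative) form'' of the one-dimensional result of Jiang to produce equivariant lifts on transverse slices varying holomorphically along the stratum; no such relative statement exists in the literature cited, and your sketch does not supply one. Second, you acknowledge that the many local corrections must be absorbed into a \emph{single} automorphism of $\quot VG$, and then declare this gluing problem ``tractable'' without an argument; this is the heart of the matter, not a technicality. It is worth noting that the paper avoids the gluing problem entirely: where both signs occur with multiplicity at least two it proves the global properties ILP and LP/DP so that Theorem \ref{thm:easy} applies, eliminating codimension-one strata not locally but by dividing by the finite subgroup $F\subset G^0$ and using simple connectivity of the principal stratum of the coarsened quotient (Theorem \ref{thm:two.pos.neg} and Corollary \ref{cor:DP.G0}); the definite-sign (non-stable) case is handled by reduction to $X^{G^0}$ and the equivariant Oka principle (Theorem \ref{thm:main.nonstable}). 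So both your reduction step and your patching step have genuine gaps, and the gap in the reduction step is fatal: it silently discards the case that is actually the hardest.
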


 Theorem \ref{thm:main} follows from Theorems \ref{thm:lift.finite}, \ref{thm:main.nonstable}, \ref{thm:two.pos.neg} and \ref{thm:last.case} below.

\smallskip\noindent
\textit{Acknowledgement.}  We thank the referees for corrections and for suggesting  improvements to the exposition. 
  \section {Background}\label{sec:background}

   For more information about the following see \cite{Luna} and \cite[Section~6]{Snow}.  Let $X$ be a normal Stein space with a holomorphic action of a reductive complex Lie group $G$.  The categorical quotient $Z=X\sl G$ of $X$ by the action of $G$ is the set of closed orbits in $X$ with a reduced Stein structure that makes the quotient map $\pi\colon X\to Z$ the universal $G$-invariant holomorphic map from $X$ to a Stein space.    Since $X$ is normal, $Z$ is normal.  If $U$ is an open subset of $Z$, then $\OO_X(\pi^{-1}(U))^G \simeq \OO_Z(U)$. We say that a subset of $X$ is \textit{$G$-saturated\/} if it is a union of fibers of $\pi$. If $X$ is the Stein space  associated to a normal affine variety $X'$ and $G$ acts algebraically on $X'$, then $Z$ is just the complex space corresponding to the affine  variety with coordinate ring  $\OO_\mathrm{alg}(X')^G$. 

If $Gx$ is a closed orbit, then the stabilizer (or isotropy group) $G_x$ is reductive.  We say that closed orbits $Gx$ and $G{y}$ have the same \textit{isotropy type} if $G_x$ is $G$-conjugate to $G_{y}$. Thus we get the \textit{isotropy type  stratification} of $Z$ with strata $Z_{(H)}$ indexed by conjugacy classes $(H)$ of reductive subgroups of $G$. The inverse image of $Z_{(H)}$ is denoted $X^{(H)}$. If $Z$ is irreducible, then there is an open dense stratum $Z_\pr$, the \emph{principal stratum}, and $X_\pr$ denotes the corresponding set of closed orbits in $X$, the set of  \emph{principal orbits}. We say that the $G$-action is \emph{stable\/} if the set of closed $G$-orbits is dense in $X$. Then $X_\pr=\pi\inv(Z_\pr)$.

Assume that $X$ is smooth and let $Gx$ be a closed orbit. Then we can consider the \emph{slice representation\/} which is the action of $G_x$ on $T_xX/T_x(Gx)$. We say that closed orbits $Gx$ and $Gy$ have the same \emph{slice type\/} if they have the same isotropy type and, after arranging that $G_x=G_y$, the slice representations are isomorphic representations of $G_x$. 
The slice type (Luna) strata are locally closed smooth subvarieties of $Z$. The   Luna stratification  is finer than the isotropy type stratification, but the Luna strata are unions of connected components of the isotropy type strata \cite[Proposition 1.2]{Schwarz1980}. Hence if the isotropy strata are connected, the Luna strata and isotropy type strata are the same. This occurs for the case of a $G$-module \cite[Lemma 5.5]{Schwarz1980}. 
 Alternatively, one can show directly that in a $G$-module, the isotropy group of a closed orbit determines the slice representation (see \cite[proof of Proposition 1.2]{Schwarz1980}).
 
Still assuming that  $X$ is smooth and that the quotient $Z$ is irreducible, we say that $X$ is \emph{$k$-principal\/}, $k\geq 0$,  if $X\setminus\pi\inv(Z_\pr)$ has codimension $k$ in $X$. Now assume that the $G$-action on $X$ is stable. Let $X_{(r)}$ denote the  set of  points   of $X$ whose isotropy group  has dimension $r$. We say that $X$ is \emph{$k$-modular\/}, $k\geq 0$, if $\codim_X\, X_{(r)}\geq r+k$,  $1\leq r\leq \dim G$. Note that this implies that there is an open dense set of closed $G$-orbits with finite isotropy group. We say that $X$ is $k$-large if it is $k$-principal   and $k$-modular. We use  \emph{large\/} as shorthand for  $2$-large. This is the key technical condition  in Theorem \ref{thm:main.old}. We have the following results about largeness \cite[\S  9 and \S  11]{GWSlifting}.

\begin{theorem}\label{thm:largeness}
Let $X$ be as above. Then 
\begin{enumerate}
\item $X$ is  large if and only if  every slice representation of $X$ is large.
\item Suppose  that $G$ is simple. There are only finitely many $G$-modules $V$ with $V^G=(0)$ which are not large.
\end{enumerate}
\end{theorem}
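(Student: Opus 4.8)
The plan is to treat the two parts separately, handling (1) by a local analysis via Luna's slice theorem and (2) by explicit codimension estimates combined with a finiteness argument. For part (1), I would exploit that both defining conditions of largeness—being $2$-principal and being $2$-modular—are statements about codimensions of $G$-stable strata, and that such codimensions may be computed locally near closed orbits. By Luna's slice theorem (valid in the Stein category, cf.\ \cite{Snow}), a $G$-saturated neighborhood of a closed orbit $Gx$ with stabilizer $H=G_x$ and slice representation $W=T_xX/T_x(Gx)$ is $G$-biholomorphic to a neighborhood of the zero section in $G\times_H W$, and this identification carries Luna strata to Luna strata. The key computation is that the isotropy group of $[g,w]\in G\times_H W$ is $gH_wg\inv$, so that $\dim G_{[g,w]}=\dim H_w$ and the stratum $(G\times_H W)_{(r)}$ equals $G\times_H W_{(r)}$; since the bundle structure over $G/H$ preserves codimension, $\codim(G\times_H W_{(r)})=\codim_W W_{(r)}$. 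Because $Z$ is irreducible the principal isotropy type is globally determined and agrees with that of each slice, so the non-principal locus matches as well. Thus near $Gx$ the codimensions of $X\setminus X_\pr$ and of $X_{(r)}$ equal the corresponding codimensions for $W$, and taking the minimum over all closed orbits (equivalently, over all slice types) gives that $X$ is $k$-large if and only if every slice representation is $k$-large. The point requiring care is that every isotropy dimension $r$ occurring in $X$ is realized in some slice and that the principal-to-principal matching is exact; both follow from the stratified isomorphism of quotients furnished by the slice theorem.

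For part (2), I would reduce largeness to explicit codimension inequalities and then show each can fail for only finitely many $V$. Writing $V_{(r)}\subseteq\bigcup_H G\cdot V^H$, the union over connected reductive $H\leq G$ with $\dim H=r$, and using $\dim(G\cdot V^H)=\dim G-\dim N_G(H)+\dim V^H$ for generic $v\in V^H$, the $2$-modular condition $\codim_V V_{(r)}\geq r+2$ becomes, for each such $H$,
\[
\dim V-\dim V^H\ \geq\ \dim G-\dim N_G(H)+\dim H+2 .
\]
The right-hand side is at most $\dim G+2$, while the left-hand side $\dim(V/V^H)$ is strictly monotone increasing as one adds copies of any irreducible on which $H$ acts nontrivially. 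Hence for each fixed $H$ the inequality fails for only finitely many $V$ with $V^G=(0)$, and an analogous estimate controls the $2$-principal condition on the non-principal locus.

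The main obstacle is uniformity: a priori there are infinitely many reductive subgroups $H$ (for instance the subtori of a maximal torus form continuous families), so one cannot merely take a finite union of finite exceptional sets. The resolution I would pursue is that for the above estimate only the $H$-module isomorphism type of $V$ matters, and for modules of bounded dimension these types are finite; combined with the monotonicity above and the finitely many conjugacy classes of subgroups that can actually occur as isotropy groups of $G$-modules, this confines all non-large $V$ with $V^G=(0)$ to a finite list. Making the relevant collection of $H$ effectively finite—so that the finiteness is genuinely uniform rather than merely pointwise in $H$—is the step I expect to demand the most care, and is presumably where the simplicity of $G$ enters essentially, through control of the normalizers $N_G(H)$ and the fixed spaces $V^H$.
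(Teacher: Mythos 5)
First, note that the paper does not actually prove this theorem: it is quoted from \cite[\S 9 and \S 11]{GWSlifting} (and, for the $k$-large version, \cite[Theorem 3.6]{HerbigSchwarzSeaton2}), so the comparison must be with the arguments there. Your part (1) is essentially the standard and correct argument: both defining conditions of largeness are codimension conditions on the sets $X_{(r)}$ and $X\setminus\pi\inv(Z_\pr)$, these are local over the quotient, Luna's slice theorem identifies a $G$-saturated neighborhood of a closed orbit with a saturated neighborhood of the zero section in $G\times_H W$, the stabilizer of $[g,w]$ is $gH_wg\inv$, and the bundle projection to $G/H$ preserves the relevant codimensions. That half is fine.

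Part (2), however, has two genuine gaps, and they sit exactly where all the work in the cited proofs lies. (a) The inclusion $V_{(r)}\subseteq\bigcup_H G\cdot V^H$, with $H$ ranging over connected \emph{reductive} subgroups of dimension $r$, is false: $V_{(r)}$ consists of all points (not only points on closed orbits) whose stabilizer has dimension $r$, and such stabilizers need not contain any positive-dimensional reductive subgroup. Concretely, for $G=\SL_2(\C)$ and $V=\C^2\oplus\C^2$, every point $(v,\lambda v)$ with $v\neq 0$ has one-dimensional \emph{unipotent} stabilizer, while $V^H=0$ for every one-dimensional reductive $H$; thus $V_{(1)}$ is a hypersurface not contained in $\bigcup_H G\cdot V^H=\{0\}$. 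Any correct proof must handle unipotent stabilizers, which is why the actual arguments work with one-dimensional subalgebras of $\lieg$ spanned by semisimple \emph{or nilpotent} elements, completing nilpotents to $\liesl_2$-triples and using $\liesl_2$-theory — an estimate entirely missing from your sketch. (b) Your proposed resolution of the uniformity problem rests on a false claim: it is not true that only finitely many conjugacy classes of subgroups occur as isotropy groups across all $G$-modules. For $G=\SL_3(\C)$, the stabilizer of the monomial $x^ay^bz^c$ in $S^d(\C^3)$ contains the subtorus $\{\operatorname{diag}(s,t,u):stu=s^at^bu^c=1\}$, and these subtori are pairwise non-conjugate as $(a,b,c)$ varies, so no finite list of reductive subgroups suffices uniformly in $V$. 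The genuine proofs obtain uniformity over the continuum of one-parameter subgroups by weight-theoretic estimates — counting the weights of $V$ that do not vanish on a given cocharacter and showing this count grows with the highest weights and multiplicities, which is where simplicity of $G$ enters — rather than by controlling normalizers or isotropy classes. Finally, the $2$-principal condition is not ``analogous'': the non-principal locus contains strata with \emph{finite} isotropy, about which fixed-point sets of positive-dimensional subgroups say nothing, so it requires a separate argument that your proposal does not supply.
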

For $G$ semisimple, there is a slightly more complicated version of (2) \cite[Cor.\ 11.6]{GWSlifting}. Part (2) also holds with ``large'' replaced by ``$k$-large'' for any $k\geq 0$ \cite[Theorem 3.6]{HerbigSchwarzSeaton2}. Note that part (2) is false for tori. For example, if $G=\C^*$ and $V$ is any 1-dimensional  $G$-module, then $V$ is not large and there are   infinitely many non-isomorphic $V$. So, in the case of tori, there are infinitely many $V$ where Theorem \ref{thm:main.old} does not apply.   Thus our techniques to establish Theorem \ref{thm:main} are quite different than those we used to prove Theorem \ref{thm:main.old}.

\section{The infinitesimal lifting and deformation properties}\label{sec:ILP.DP}
 
Let $V$ be a $G$-module and let $\pi\colon V\to Z$ be the categorical quotient.  Let $\Aut(Z)$ denote the strata preserving holomorphic automorphisms of $Z$ and let $\A(Z)$ denote the holomorphic strata preserving vector fields on $Z$.   Let $\Aut(V)$  be the group of holomorphic automorphisms of $V$ and let $\A(V)$ denote the holomorphic vector fields on $V$.  Then we have a morphism $\pi_*\colon\A(V)^G\to\A(Z)$ by restricting $A\in\A(V)^G$ to $\O(Z)=\O(V)^G$.
Let $\Phi$ be a biholomorphism of Stein $G$-manifolds. We say that $\Phi$ is \emph{$\sigma$-equivariant\/} if there is an automorphism $\sigma\in\Aut(G)$ such that $\Phi\circ g=\sigma(g)\circ\Phi$ for all $g\in G$.   

  We say that $V$ has the \emph{infinitesimal lifting property} (ILP) if every strata preserving holomorphic vector field  $B\in\A(Z)$ is $\pi_*(A)$ for some  holomorphic vector field $A\in\A(V)^G$.  
 The scalar action of $\C^*$ on $V$ induces an action of $\C^*$ on $Z$ whose attractive fixed point is the image $*$ of $0\in V$. We denote by $\Aut_\ql(Z)$   the \emph{quasi-linear automorphisms of $Z$}, i.e., those which commute with the $\C^*$-action. If $\theta\in\Aut(Z)$, then we get a family $\theta_t\in\Aut(Z)$ where $\theta_t=t\inv\circ\theta\circ t$, $t\in\C^*$. We say that $V$ has the \emph{deformation property} (DP) if for any $\theta\in\Aut(Z)$, the limit $\theta_0$ of $\theta_t$ exists as $t\to 0$. Then $\theta_0\in\Aut_\ql(Z)$.
Finally, we say that $V$ has the \emph{lifting property} (LP) if any $\theta\in\Aut(Z)$ has a lift $\Theta$ to $V$. Here $\Theta$ need not be be equivariant, but, of course, it has to send the fiber over any $z\in Z$ to the fiber over $\theta(z)$.

 From Section 5 of \cite{KLSb} we have the following result.  
 
 \begin{theorem}\label{thm:easy}
 Let $X$ be a Stein $G$-manifold and let $\phi\colon\quot XG\to Z$ be a strata preserving biholomorphism. Suppose that $V$ has the ILP and DP. Then, after perhaps changing $\phi$ by an element of $\Aut_\ql(Z)$, $\phi$ has a $G$-equivariant biholomorphic lift $\Phi\colon X\to V$.
 \end{theorem}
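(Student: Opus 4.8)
The plan is to reduce the assertion to the first (unlabeled) theorem quoted in the introduction, which says that if $X$ and $V$ are locally $G$-biholomorphic over a common quotient, then they are globally $G$-biholomorphic. Using $\phi$ to identify $\quot XG$ with $Z=\quot VG$, the goal becomes: after modifying $\phi$ by an element of $\Aut_\ql(Z)$, produce an open cover $\{U_i\}$ of $Z$ together with $G$-biholomorphisms $\pi_X^{-1}(U_i)\to\pi_V^{-1}(U_i)$ each descending to the identity of $U_i$. Once these are in hand the cited theorem supplies the global lift $\Phi$, and tracking the modification accounts for the clause ``after perhaps changing $\phi$''.

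First I would produce local lifts, ignoring for the moment the descent condition. Fix $z\in Z$ with closed orbits $Gx\subset X$ and $Gv\subset V$ lying over it. Because $\phi$ is strata preserving, $Gx$ and $Gv$ have the same isotropy type and the same slice type; after conjugating we may take $G_x=G_v=H$ with isomorphic slice representations $N$. Luna's slice theorem then gives $G$-saturated neighborhoods on which both $X$ and $V$ are $G$-biholomorphic to the linear slice model $G\times_H N$. Composing these, we obtain a $G$-biholomorphism between saturated neighborhoods of $Gx$ and $Gv$ which descends to \emph{some} biholomorphism of neighborhoods in $Z$; under our identification this differs from the identity by a strata preserving automorphism defined near $z$. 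Thus the only obstruction to the descent condition is a collection of local strata preserving automorphisms of $Z$.

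The next step is to absorb these discrepancies using the ILP and DP. The ILP says that a strata preserving holomorphic vector field on $Z$ lifts to a $G$-invariant holomorphic vector field on $V$; integrating, a path of strata preserving automorphisms of $Z$ issuing from the identity lifts to a path of $G$-equivariant automorphisms of $V$. To connect a discrepancy $\theta\in\Aut(Z)$ to a normal form along such a path I would use the canonical deformation $\theta_t=t^{-1}\circ\theta\circ t$ coming from the scalar $\C^*$-action. The DP guarantees that the limit $\theta_0$ exists as $t\to 0$ and is quasi-linear. This simultaneously furnishes the path along which to integrate the lifted vector field and isolates the quasi-linear residue $\theta_0$, which cannot be removed and must instead be absorbed into $\phi$ — precisely the $\Aut_\ql(Z)$ modification allowed in the statement. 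Correcting each local biholomorphism by the corresponding lifted $G$-automorphism of $V$ then makes all of them descend to the identity.

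The main obstacle I anticipate is the passage from this local, infinitesimal data to a single global lift, where two points need care. First, the automorphisms in play are non-abelian, so patching the corrected local biholomorphisms is governed by a nonabelian Oka/Grauert principle on the Stein base $Z$ rather than by ordinary sheaf cohomology; one must verify that the surjectivity of $\pi_*$ provided by the ILP makes the relevant sheaf of germs of invariant automorphisms rich enough to kill the gluing obstruction. Second, everything must be controlled uniformly near the attractive fixed point $*$, the image of $0\in V$, where the strata accumulate; the convergence of $\theta_t$ as $t\to 0$ guaranteed by the DP is exactly the tool that tames this behavior at the origin. Hence the heart of the proof is the coordinated use of the ILP for the surjectivity needed to glue and of the DP for the convergence needed at $*$.
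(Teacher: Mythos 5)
You should note at the outset that the paper contains no proof of this theorem: it is quoted from Section 5 of \cite{KLSb}. So the comparison below is with the mechanism that \cite{KLSb} (and the present paper, in its proof of Theorem \ref{thm:last.case}) actually uses. Your opening moves are right --- reducing to the unlabeled theorem of the introduction and using Luna's slice theorem to produce local models whose discrepancies are strata preserving automorphisms of open sets in $Z$ --- but your Step 3 misapplies the DP, and this is a genuine gap. The discrepancies from your Step 2 live on small neighborhoods of \emph{arbitrary} points $z\in Z$, whereas the DP is a statement about $\theta\in\Aut(Z)$, i.e.\ about globally defined automorphisms (at best, about germs at the attractive fixed point $*$). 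For a discrepancy $\theta$ defined only near $z\neq *$, the conjugate $\theta_t=t^{-1}\circ\theta\circ t$ is defined at $u$ only when $t\cdot u\in\mathrm{dom}\,\theta$; since $t\cdot u\to *\notin\mathrm{dom}\,\theta$ as $t\to 0$, the maps $\theta_t$ have no common domain for small $t$ and no limit $\theta_0$ exists. The deformation trick therefore cannot normalize your local discrepancies. Moreover, even if each chart discrepancy could be split into a liftable part and a quasi-linear residue (say, using the $\C^*$-action of the local slice model at $z$), you would obtain one residue \emph{per chart}, while the theorem permits changing $\phi$ only once, by a single element of $\Aut_\ql(Z)$; your proposal has no mechanism to make these residues consistent, and there is no reason for them to be trivial chart by chart.

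The actual architecture avoids this by being local only at the cone point and global everywhere else. One compares $\phi$ with the slice model along the minimal (central) fiber; the resulting discrepancy is a germ at $*$, to which the family $\theta_t$ does apply; its quasi-linear limit $\theta_0$ is the single correction absorbed into $\phi$, and the remaining part, joined to the identity by the path $\theta_t\circ\theta_0^{-1}$, is lifted $G$-equivariantly by integrating an ILP-lift of its time derivative. The other points of $Z$ are not treated chart by chart at all: the ILP (localized over Stein open subsets of $Z$ via coherence and Cartan's Theorems A and B) produces a global invariant vector field on $X$ lifting the pushdown of the Euler vector field of $V$, and its flow propagates the equivariant lift from the central neighborhood over all of $X$ --- this is exactly ``the argument for the proof of Theorem 1 of \cite{KLSb}'' that the present paper invokes in proving Theorem \ref{thm:last.case}. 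Two further points in your write-up need proof rather than assertion even where your outline is salvageable: the ILP as defined is a global surjectivity, so applying it to locally defined or $t$-dependent vector fields requires the localization argument just mentioned; and the flow of a lifted vector field is not automatically complete (the fibers of $\pi$ are non-compact), so the integration step requires a completeness argument.
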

 
 Here are some results about the ILP.
 
 \begin{proposition}\label{prop:ILP}
 Let $V$ be a $G$-module. Let $V_0$ be a $G$-complement to $V^G$ in $V$.
 \begin{enumerate}
 \item $V$ has the ILP if and only if $V_0$ has the ILP.
\item If $(V,G^0)$ has   the ILP then so does $(V,G)$.
\end{enumerate}
\end{proposition}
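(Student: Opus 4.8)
The plan is to reduce both assertions to the surjectivity of the comorphism $\pi_*\colon\A(V)^G\to\A(Z)$, $Z=V\sl G$, and in each case to exploit a product or quotient structure of $Z$. Write $W=V^G$, so $V=W\oplus V_0$ with $W$ a trivial $G$-module. Since $\O(V)^G=\O(W)\comptensor\O(V_0)^G$, the quotient factors as $Z=W\times Z_0$ with $Z_0=V_0\sl G$ and $\pi=\id_W\times\pi_0$. The isotropy group and slice representation of a closed orbit $G(w,v)$ agree with those of $Gv$ up to a trivial summand $W$, so the Luna strata of $Z$ are exactly the products $W\times S$ with $S$ a Luna stratum of $Z_0$; hence a vector field on $Z$ is strata preserving precisely when its $Z_0$-component is tangent to every $S$, the $W$-component being unconstrained.

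For (1) I would split both sides of $\pi_*$ into their $W$- and $Z_0$-directions. On the $W$-direction each $\partial_{w_i}$ is a $G$-invariant field on $V$ with $\pi_*(\partial_{w_i})=\partial_{w_i}$, so $\O(Z)\cdot\{\partial_{w_i}\}$ is always in the image and the $W$-direction never obstructs the ILP. On the $Z_0$-direction, $W$-coefficients are $G$-invariant and pass through $\pi_*$ unchanged, so the induced map is $\id\comptensor\pi_{0*}\colon\O(W)\comptensor\A(V_0)^G\to\O(W)\comptensor\A(Z_0)$. Evaluating at $w=0$ shows surjectivity of this map forces surjectivity of $\pi_{0*}$, while conversely, $\O(W)$ being nuclear and $\pi_{0*}$ a surjection of Fréchet spaces, $\comptensor\O(W)$ preserves surjectivity. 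Thus $V$ has the ILP iff $V_0$ does. The two points needing care, both routine here because $W$ is a smooth factor on which $G$ acts trivially, are the identification of the $Z_0$-directional strata-preserving fields on the singular product $W\times Z_0$ with $\O(W)\comptensor\A(Z_0)$, and the exactness of $\comptensor\O(W)$.

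For (2), set $\Gamma=G/G^0$, a finite group, and $Z_1=V\sl G^0$, so $Z=Z_1/\Gamma$ and $\O(Z)=\O(Z_1)^\Gamma$. The comorphism $\pi_{0*}\colon\A(V)^{G^0}\to\A(Z_1)$ is $\Gamma$-equivariant (the $G$-action by pushforward descends to $\Gamma$ on $G^0$-invariant objects) and, by the ILP for $(V,G^0)$, surjective. Since $\Gamma$ is finite, taking $\Gamma$-invariants is exact, so $\pi_{0*}$ remains surjective on invariants, $(\A(V)^{G^0})^\Gamma\to(\A(Z_1))^\Gamma$. Here $(\A(V)^{G^0})^\Gamma=\A(V)^G$ is immediate, and concretely the lift is produced by averaging a $G^0$-invariant lift over a set of coset representatives of $\Gamma$. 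The crux, which I expect to be the main obstacle, is the identification $(\A(Z_1))^\Gamma=\A(Z)$: a strata-preserving field on the quotient $Z$ must be recognized as a $\Gamma$-invariant strata-preserving field on $Z_1$, and conversely. To establish it I would use that each closed $G$-orbit is a finite disjoint union of closed $G^0$-orbits permuted by $\Gamma$, so that a $G$-Luna stratum pulls back to a single $\Gamma$-orbit of $G^0$-Luna strata; that $Z_1\to Z$ is finite and generically étale with $Z_1$ normal, whence every derivation of $\O(Z)=\O(Z_1)^\Gamma$ extends uniquely to a $\Gamma$-invariant derivation of $\O(Z_1)$ (extend over the fraction field, then use normality to return to $\O(Z_1)$); and that the extension is strata preserving exactly when the original field is. Granting this descent, every $B\in\A(Z)$ lifts to $\widetilde B\in(\A(Z_1))^\Gamma$, then to $\widetilde A\in\A(V)^{G^0}$, and the $\Gamma$-average $A\in\A(V)^G$ satisfies $\pi_*(A)=B$, so $(V,G)$ has the ILP.
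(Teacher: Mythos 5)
Your part (1) is essentially a complete proof (the paper itself offers none: it simply cites Lemma 7.1(1) and Proposition 8.2 of \cite{Schwarz1980} for both assertions). The product decomposition $Z=V^G\times(\quot{V_0}{G})$, the identification of the Luna strata of $Z$ with $V^G\times S$, the splitting of a strata-preserving field into a $V^G$-component (which always lifts) and a $Z_0$-component, and the two directions via $\comptensor\,\O(V^G)$ and evaluation at $w=0$ are all sound, modulo the standard nuclear Fr\'echet facts you correctly flag as routine.

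Part (2), however, has a genuine gap at exactly the point you call the crux. The reduction itself — $\Gamma$-equivariance of the pushdown $\A(V)^{G^0}\to\A(Z_1)$, exactness of $\Gamma$-invariants, averaging a $G^0$-invariant lift — is fine, but it shifts the entire content onto the descent claim $\A(Z)=(\A(Z_1))^\Gamma$, i.e.\ that a strata-preserving field on $Z=Z_1/\Gamma$ lifts holomorphically to $Z_1$. Your argument for this claim (extend the derivation to the fraction field, then ``use normality to return to $\O(Z_1)$'') fails: the rational extension has poles along the ramification divisor of $Z_1\to Z$, which has codimension \emph{one}, and normality only removes indeterminacy in codimension at least two. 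Concretely, for $\Gamma=\Z/2\Z$ acting on $Z_1=\C$ by $z\mapsto -z$, with $\O(Z)=\C[u]$, $u=z^2$, the derivation $\partial_u$ extends over the fraction field to $\tfrac{1}{2z}\partial_z$, which is not holomorphic although $\C$ is normal. What makes the true statement true is the hypothesis you invoke only as an afterthought: tangency of $B$ to the codimension-one strata (which contain the branch locus) is precisely what cancels these poles, via a local computation at a generic point of a reflection-type stratum (locally $a\,\partial_u$ with $a=ub$ lifts to $\tfrac1r z\,b(z^r,\cdot)\,\partial_z$, holomorphic), after which Hartogs/normality handles the codimension-two remainder. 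Your proof never performs this step; it treats strata-preservation as a property to verify of an extension whose holomorphy was never established. Note also that when $G$ is finite your descent claim, with $Z_1=V$, \emph{is} the proposition being proved, so the argument is circular in exactly the case the paper leans on (Theorem \ref{thm:lift.finite}). This branch-divisor analysis is the substance of the result cited by the paper.
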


 \begin{proof}
See  \cite[Lemma 7.1(1), Proposition 8.2]{Schwarz1980}.  
 \end{proof}
 
 Let $\NN(V)=\pi\inv(\pi(0))$ denote the \emph{nullcone\/} of $V$. Let $H$ denote a \emph{principal isotropy group}, i.e., $(H)$ is the conjugacy class of the stratum $Z_\pr$. Then $Z$ is the quotient of $V^H$ by $N=N_G(H)/H$ where the principal isotropy groups of $V^H$ are trivial so that the action is stable \cite{LunaRichardson}.  The strata of $\quot VG$ are the same as those of $\quot {V^H}N$, with a change of label \cite[Thm.\ 11.3]{Schwarz1980}. Thus if $H$ is normal in $G$, then the ILP and/or the DP for $(V^H,N=G/H)$ implies that for $(V,G)$ (and vice versa).  

 \begin{proposition}\label{prop:ILP.tori}
Let $V$ be a $G$-module where $G^0=\C^*$  and $V^G=(0)$.  Assume  that $\dim Z\geq 3$ and that $\NN(V)$ has codimension at least $2$ in $V$.
Then $V$ has the ILP.
 \end{proposition}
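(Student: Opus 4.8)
The plan is to realize the infinitesimal lifting property as two cohomological inputs on the quotient — local liftability of germs, plus a vanishing theorem on the Stein base — after reducing to a connected one‑dimensional group. First I would apply Proposition~\ref{prop:ILP}(2) to reduce to $G=G^0=\C^*$, and Proposition~\ref{prop:ILP}(1) to discard any trivial summand; in the essential case we then have $V^{\C^*}=(0)$. Writing the weights as $a_1,\dots,a_p>0$ and $-b_1,\dots,-b_q<0$ and dividing by their gcd (harmless for $Z$) makes the principal isotropy trivial. Then $\NN(V)=\{x=0\}\cup\{y=0\}$, so $\codim\NN(V)=\min(p,q)$ and the hypothesis forces $p,q\geq 2$; moreover $\dim Z=p+q-1$.

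On $Z$ I would introduce the sheaf $\mathcal L$ of ($\pi$-pushed-forward) $G$-invariant holomorphic vector fields, the sheaf $\D$ of strata-preserving fields, the map $\pi_*\colon\mathcal L\to\D$, and its kernel $\mathcal K$. The Euler field $E=\sum_k w_k z_k\,\partial_{z_k}$ is a globally defined invariant field with $\pi_*E=0$, so $\O_Z\cdot E\subseteq\mathcal K$; away from $\NN(V)$ the orbits are closed with finite isotropy, so every vertical invariant field is $g\,E$ with $g$ invariant, and since $Z$ is normal and $\NN(V)$ has codimension $\geq 2$ the coefficient $g$ extends across the vertex. Hence $\mathcal K\cong\O_Z$. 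Given this, the Proposition reduces to the single claim \textbf{(A)}: that $\pi_*\colon\mathcal L\to\D$ is surjective \emph{as a map of sheaves}. For then the long exact cohomology sequence together with $H^1(Z,\mathcal K)=H^1(Z,\O_Z)=0$ (as $Z$ is Stein) yields surjectivity on $H^0$, which is exactly the ILP.

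To prove (A), let $*=\pi(0)$. Over $Z\setminus\{*\}$ every closed orbit has finite isotropy, so by Luna's slice theorem the germ of the lifting problem there is that of a slice representation of a finite group, and finite groups satisfy the ILP; hence $\pi_*$ is surjective over $Z\setminus\{*\}$. The crux is liftability of germs at $*$, and this is where both numerical hypotheses enter. Fix a $G$-saturated Stein neighborhood $\pi^{-1}(W)$ of $\NN(V)$ with $W\ni *$ Stein of dimension $\dim Z\geq 3$. Over $W\setminus\{*\}$ the previous step gives sheaf surjectivity, so the obstruction to lifting $B|_{W\setminus\{*\}}$ lies in $H^1(W\setminus\{*\},\mathcal K)=H^1(W\setminus\{*\},\O_Z)$. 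Because categorical quotients of $G$-modules are Cohen--Macaulay (Hochster--Roberts), $\operatorname{depth}_*\O_Z=\dim Z\geq 3$, so local cohomology gives $H^1(W\setminus\{*\},\O_Z)\cong H^2_{\{*\}}(W,\O_Z)=0$. Thus $B$ lifts to an invariant field $A_0$ on $\pi^{-1}(W)\setminus\NN(V)$, and since $\NN(V)$ has codimension $\geq 2$ in $V$ the holomorphic coefficients of $A_0$ extend by the Hartogs (Riemann) theorem to an invariant field $A$ on $\pi^{-1}(W)$ with $\pi_*A=B$. This establishes (A) at $*$ and completes the argument.

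I expect the technical heart to be precisely this vertex step: the interplay of the Cohen--Macaulay depth estimate, which needs $\dim Z\geq 3$, with the Hartogs extension across the nullcone, which needs $\codim\NN(V)\geq 2$. The genuinely delicate point is likely to be the bookkeeping of these two codimension hypotheses through the reductions to $G^0=\C^*$ — in particular controlling the situation in which $G^0=\C^*$ has a nonzero fixed space even though $V^G=(0)$, where the locus of full isotropy is no longer a single point and the depth/extension estimates must be applied along a positive-dimensional stratum.
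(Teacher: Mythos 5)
Your proposal is correct and follows essentially the same route as the paper's proof: local lifts over $Z\setminus\{*\}$ coming from the slice theorem at finite-isotropy closed orbits, identification of the ambiguity between lifts with $\O_Z$ via the vertical (Euler-type) field, vanishing of the relevant $H^1$ from Cohen--Macaulayness together with $\dim Z\geq 3$, and Hartogs extension across $\NN(V)$. The only difference is organizational --- you package the argument as sheaf-level surjectivity of $\pi_*$ plus Cartan B on the Stein quotient, whereas the paper glues directly over $Z\setminus\{*\}$ in one step --- and the reduction to a faithful $\C^*$-action with zero fixed space, whose bookkeeping you flag as the delicate point, is handled just as implicitly in the paper's own proof.
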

 
 \begin{proof}
We may reduce to the case that the $G$-action is   faithful and that $G=\C^*$. Let   $B\in\A(Z)$ and let $*$ denote $\pi(0)$.  Let $C$ be the invariant vector field on $V$ corresponding to a generator of the Lie algebra $\lieg$ of $G$. Since the isotropy groups of closed nonzero orbits are finite, by Proposition \ref{prop:ILP}(2) and the slice theorem, there is an open cover $U_i$ of $Z\setminus \{*\}$ and lifts $A_i$ of $B$ to $\pi\inv(U_i)$. The differences $A_i-A_j$ are multiples of $C$ with coefficients in $\O(U_i\cap U_j)$. Thus the obstruction to glueing the $A_i$ is in $H^1(Z\setminus\{*\},\O_Z)$. Since $Z$ is Cohen-Macaulay of dimension at least 3, the obstruction vanishes and there is a lift  $A'$ of $B$ to $\A(V\setminus\NN(V))^G$. By Hartog's theorem, $A'$ extends to    $A\in\A(V)^G$ which is  a lift of $B$.
 \end{proof}

 \begin{proposition}\label{prop:LP.implies.DP}
If $V$ has the LP, then it has the  DP. 
 \end{proposition}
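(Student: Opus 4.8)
The plan is to lift $\theta$ to $V$, conjugate the lift by the linear scalar $\C^*$-action, and read off the limit from a Taylor expansion at the origin; the only genuine work is to show that the lift fixes $0$. Concretely, given $\theta\in\Aut(Z)$ I would use the LP to choose a biholomorphic lift $\Theta\in\Aut(V)$, so that $\pi\circ\Theta=\theta\circ\pi$. Writing $t$ also for scalar multiplication by $t$ on $V$, the conjugate $\Theta_t(v):=t^{-1}\Theta(tv)$ is again a biholomorphism of $V$ and lifts $\theta_t=t^{-1}\circ\theta\circ t$, since $\pi$ intertwines the two scalar actions. Expanding $\Theta$ in homogeneous parts at the origin, $\Theta=\sum_{d\ge0}\Theta_d$, yields $\Theta_t=\sum_{d\ge0}t^{d-1}\Theta_d$. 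Thus $\lim_{t\to0}\Theta_t$ exists locally uniformly precisely when the constant term $\Theta_0=\Theta(0)$ vanishes, in which case the limit is the linear automorphism $\Theta_1=D\Theta(0)\in\GL(V)$. The whole statement therefore reduces to the claim that the lift may be taken with $\Theta(0)=0$.

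To establish this I would first note that $\theta$ fixes the base point $*=\pi(0)$: after reducing to $V^G=(0)$ (compatibly with the stratification and the scalar action, as for the ILP in Proposition \ref{prop:ILP}), the deepest Luna stratum, that of isotropy type $(G)$, is the single point $*$, so the strata preserving $\theta$ must fix it. Consequently $\Theta$ carries the fiber $\NN(V)=\pi^{-1}(*)$ biholomorphically onto itself. Here the hypothesis $\dim G\le1$ enters decisively. If $G$ is finite then $\NN(V)=\{0\}$ and there is nothing to prove; if $G^0=\C^*$ then, writing $V=V^+\oplus V^-$ for the sums of the positive- and negative-weight subspaces, one has $\NN(V)=V^+\cup V^-$ with $V^+\cap V^-=\{0\}$. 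These two linear subspaces are the irreducible components of $\NN(V)$, so the biholomorphism $\Theta$ permutes them and hence preserves their intersection; therefore $\Theta(0)=0$, as needed.

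With $\Theta(0)=0$ in hand the proof finishes quickly. The family $\Theta_t$ converges locally uniformly to $\Theta_1\in\GL(V)$, so for $z=\pi(v)$ the value $\theta_t(z)=\pi(\Theta_t(v))$ converges locally uniformly to a well-defined holomorphic map $\theta_0\colon Z\to Z$. Reparametrizing $t\mapsto ts$ shows that $\theta_0$ commutes with the scalar action, and applying the same construction to $\theta^{-1}$ (lifted by $\Theta^{-1}$) produces the inverse of $\theta_0$; hence $\theta_0\in\Aut_\ql(Z)$ and $V$ has the DP.

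The crux I expect is precisely the reduction to $\Theta(0)=0$, together with its prerequisite $\theta(*)=*$; the convergence in the last paragraph is then purely formal. This is exactly the point at which the LP is used: a weight-decreasing automorphism of $Z$ would produce a divergent family $\theta_t$, but such an automorphism admits no lift, and once a lift exists the vertex structure of the nullcone forces it to fix the origin. The delicate part of writing this up is justifying that $*$ is preserved (and handling any contribution of $V^G$), since without $\theta(*)=*$ the family $\theta_t$ need not remain bounded.
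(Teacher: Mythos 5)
Your strategy --- use the LP to produce a lift $\Theta$, conjugate by the scalar action, and extract the limit from the homogeneous expansion once $\Theta(0)=0$ --- is genuinely different from the paper's proof, which is a one-line deduction from \cite[Proposition 2.9]{Schwarz2014}, a result valid for an arbitrary reductive group $G$. That difference in scope is already a problem: Proposition \ref{prop:LP.implies.DP} is stated in Section \ref{sec:ILP.DP}, where no restriction is placed on $G$ (the standing assumption $G^0=\C^*$ begins only in Section \ref{sec:easy}), whereas your argument invokes ``the hypothesis $\dim G\le 1$'' decisively. So even if complete, your proof would establish only a special case of the stated proposition. Your formal convergence mechanism (the expansion $\Theta_t=\sum_d t^{d-1}\Theta_d$, the descent of $D\Theta(0)$ to $\theta_0$, and the inverse coming from $\Theta\inv$) is fine, as is the finite-group case with $V^G=(0)$ and the connected case $G=\C^*$, $V^G=(0)$, with weights of both signs.

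The genuine gap lies inside the case $G^0=\C^*$ itself: your claim that ``once a lift exists the vertex structure of the nullcone forces it to fix the origin'' is false. Writing $V^{\pm}$ for the sums of the strictly positive/negative weight spaces, one indeed has $\NN(V)=V^+\cup V^-$, but when $G\neq G^0$ it can happen that $V^G=(0)$ while $V^{G^0}\neq(0)$ and all nonzero weights have one sign; then $Z$ need not be a point, $\NN(V)$ is a \emph{single} positive-dimensional linear subspace, and a biholomorphism of it need not fix $0$. Concretely, let $G=\C^*\times\Z/2\Z$ act on $V=\C^2_{x,y}$ with $\C^*$ acting on $x$ with weight $1$ and $\Z/2\Z$ acting on $y$ by $-1$. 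Then $\O(V)^G=\C[y^2]$, $Z=\C$ with $*=0$, $\NN(V)=\{y=0\}$, and $\Theta(x,y)=(x+1,\sqrt a\, y)$ is a lift of the strata preserving automorphism $\theta(u)=au$ that moves the origin. (Here DP holds for trivial reasons, but replacing $\C_y$ by $\C^2_y$ gives examples of this type where $\Aut(Z)$ contains nonlinear automorphisms, so the subcase has real content; there DP follows because $Z$, with its grading and strata, is the quotient of $V^{G^0}$ by the finite group $G/G^0$ --- a reduction your argument does not contain.) Separately, the reduction to $V^G=(0)$ that you defer is not a routine normalization, as you yourself flag: when $V^G\neq(0)$, strata preserving automorphisms need not fix $*$ at all (translations along $\pi(V^G)$ are strata preserving and liftable, and for them $\theta_t$ diverges), so one must either read the definition of the DP as quantifying over base-point-preserving automorphisms or supply an explicit normalization; your sketch does neither. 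As written, both the one-signed-weight subcase and the case $V^G\neq(0)$ remain unproved.
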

 \begin{proof}
This follows from  \cite[Proposition 2.9]{Schwarz2014}. (The proposition assumes that $V$ is $2$-principal, but that is not needed for the conclusion about the DP.)
\end{proof}

\begin{proposition}\label{prop:lift.finite}
Let $H$ be a finite group and $W$ an  $H$-module with quotient morphism $p\colon W\to W/H$. Let $U\subset W/H$ be open and let $\theta\in\Aut(U)$. Then there is a lift $\Theta\in\Aut(p\inv(U))$. Thus  $W$ has the LP, hence also the DP. 
\end{proposition}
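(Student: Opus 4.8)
The plan is to realize $\Theta$ as a lift of $\theta$ for the finite branched covering $p\colon W\to W/H$, constructing it first over the principal stratum by covering-space theory and then extending it across the branch locus by normality.

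First I would make two reductions. Replacing $H$ by its image in $\GL(W)$, I may assume the action is faithful. Next, writing $H_{\mathrm{ref}}\trianglelefteq H$ for the normal subgroup generated by the pseudoreflections in $H$, the Chevalley--Shephard--Todd theorem gives $W/H_{\mathrm{ref}}\cong\C^n$ and exhibits $H':=H/H_{\mathrm{ref}}$ as a group acting on this $\C^n$ without pseudoreflections, with $(\C^n)/H'\cong W/H$. A lift through $W\to W/H$ can be assembled by composing a lift through $\C^n\to(\C^n)/H'=W/H$ with a lift through the reflection quotient $W\to W/H_{\mathrm{ref}}\cong\C^n$, so it suffices to treat two cases: the case where $H$ has no pseudoreflection, and the case where $H=H_{\mathrm{ref}}$ is a reflection group. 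I would carry out the first (principal) case in detail. There the non-free locus $\Sigma\subset W$ is the union of the fixed-point sets $W^g$, $g\neq e$, each of complex codimension $\ge 2$, so $W_\pr:=W\setminus\Sigma$ is the complement of a codimension-$\ge 2$ analytic set in $\C^n$ and is therefore simply connected. Consequently $W_\pr\to (W/H)_\pr$ is the universal covering of the principal stratum, with deck group $H$.

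Now I would construct the lift over the principal stratum. Set $U_\pr=U\cap(W/H)_\pr$, so that $p\inv(U_\pr)\subset W_\pr$ is the pullback to $U_\pr$ of this universal covering. Since $\theta$ is strata preserving it restricts to a biholomorphism of $U_\pr$, and I would consider the holomorphic map $g=\theta\circ p\colon p\inv(U_\pr)\to(W/H)_\pr$. The lifting criterion for the universal covering says that $g$ factors through $W_\pr$ precisely when $g_*\bigl(\pi_1(p\inv(U_\pr))\bigr)$ is trivial in $\pi_1((W/H)_\pr)=H$, and any such factorization is a lift $\Theta_0\colon p\inv(U_\pr)\to W_\pr$ of $\theta$ over $U_\pr$. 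Because $p_*\pi_1(p\inv(U_\pr))$ is exactly the kernel $N$ of the monodromy $\rho\colon\pi_1(U_\pr)\to H$, the criterion amounts to the assertion that $\theta_*$ preserves $N$. This is where strata preservation enters: $\theta$ carries each singular stratum of $U$ to a stratum of the same isotropy type and, by the Luna slice structure, respects the local model of the covering along it, hence the local monodromy data and so the subgroup $N$. When $U=Z$ one has $N=1$ and the point is immediate, which already yields the lifting property for $\theta\in\Aut(Z)$.

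Finally I would extend and conclude. The map $\Theta_0$ is defined on $p\inv(U_\pr)=p\inv(U)\setminus\Sigma$, the complement of an analytic set of codimension $\ge 2$, and it is locally bounded near $\Sigma$ since $p\circ\Theta_0=\theta\circ p$ is; as $W$ is smooth, Riemann's extension theorem yields a holomorphic $\Theta\colon p\inv(U)\to p\inv(U)$ with $p\circ\Theta=\theta\circ p$. Applying the same construction to $\theta\inv$ produces $\Theta'$, and $\Theta'\circ\Theta$, $\Theta\circ\Theta'$ are lifts of the identity agreeing with $\id$ off a codimension-$\ge 2$ set, hence equal $\id$; thus $\Theta\in\Aut(p\inv(U))$. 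The reflection-group case runs along the same scheme, the branch locus now being a discriminant hypersurface and the extension across it again supplied by normality of $W$. This shows $W$ has the LP, and the DP follows from Proposition \ref{prop:LP.implies.DP}. The main obstacle is the middle step: $\theta$ is given only on $U$ and only as a strata-preserving biholomorphism, yet one must show it is compatible with the covering $p\inv(U_\pr)\to U_\pr$, i.e.\ that $\theta_*$ preserves the monodromy kernel $N$. All the geometric input --- faithfulness, the reduction to codimension-$\ge 2$ branching via Chevalley--Shephard--Todd, and the preservation of isotropy types and local slice models --- is marshalled precisely to control this monodromy, after which the extension across the branch locus is routine.
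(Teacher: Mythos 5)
Your framework---lift $\theta$ over the principal stratum by covering-space theory, then extend across the branch locus by the Riemann extension theorem---is sound as far as it goes, and you correctly isolate the crux: one must show that $\theta_*$ preserves the monodromy kernel $N=\ker\bigl(\rho\colon\pi_1(U_\pr)\to H\bigr)$. But precisely there your argument has a genuine, and in this generality unfixable, gap. Your justification is that $\theta$ preserves isotropy types and local slice models, ``hence the local monodromy data and so the subgroup $N$''. That is a non sequitur: $N$ is a global object, and preserving the monodromy of small loops around the singular strata says nothing about loops in $U_\pr$ that are not products of such local loops. The decisive test case is $U$ contained entirely in the principal stratum: then strata preservation is vacuous, every biholomorphism of $U$ lies in $\Aut(U)$, and nothing in your argument constrains $\theta_*$ at all. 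In that setting the desired monodromy preservation can actually fail. Take $H=\{\pm I\}$ acting on $W=\C^2$, and let $\tilde C\subset\C^2\setminus\{0\}$ be the image of the proper equivariant embedding $s\mapsto (s,\,s/(s^2+1))$ of $\C^*\setminus\{\pm i\}$. Its image $C=\tilde C/\!\pm$ in the principal stratum is a copy of $\mathbb{P}^1\setminus\{0,-1,\infty\}$ (coordinate $t=s^2$), and the double cover $\tilde C\to C$ has trivial monodromy around $t=-1$ and nontrivial monodromy around $t=0$ and $t=\infty$. By Siu's theorem and Docquier--Grauert, $C$ has a tubular neighborhood $U$ in the principal stratum which, after shrinking (intersect with its image and pass to the fiberwise star-shaped core of the trivialized normal bundle), is invariant under a biholomorphism $\theta$ extending the M\"obius automorphism $t\mapsto -1-t$ of $C$ and deformation retracts onto $C$. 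Then $\theta\in\Aut(U)$, but $\theta_*$ sends the class of the loop around $t=-1$, which lies in $N$, to a conjugate of the loop around $t=0$, which does not; so this $\theta$ admits no lift, even a continuous one. Hence strata preservation alone can never deliver the step your proof rests on.

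For comparison: the paper does not prove this proposition by a covering-theoretic argument at all; it cites \cite[Theorem 5.4]{KrieglTensor} (see also \cite{Lyashko} and \cite[Theorem 3.1]{Schwarz2009}), whose proofs use quite different methods, and in this paper the statement is only ever applied with $U$ equal to the whole quotient or to a connected neighborhood of the image of a fixed-point set, where the above topological obstruction does not arise. Note also that your case $U=Z$ does go through for small groups (there $N=1$, as you say), but your Chevalley--Shephard--Todd reduction leaves the reflection half genuinely open: for $H=H_{\mathrm{ref}}$ the set $p\inv(Z_\pr)$ is the complement of the reflection hyperplanes, which is far from simply connected, and ``the same scheme'' is exactly the hard content of Lyashko's theorem (for the symmetric group it amounts to showing that a discriminant-preserving automorphism preserves the pure braid subgroup of the braid group). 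Finally, the reduction itself silently uses that $H/H_{\mathrm{ref}}$ acts linearly and without pseudoreflections on $W/H_{\mathrm{ref}}\cong\C^n$; both facts are true but require proof (e.g.\ choosing generating invariants spanning an $H$-stable subspace).
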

\begin{proof}
See \cite[Theorem 5.4]{KrieglTensor} (see also \cite{Lyashko} or \cite[Theorem 3.1] {Schwarz2009}).
\end{proof}
\begin{remark}\label{rem:sigma.equivar}
Let $\theta\in\Aut(U)$ where $U$ is connected and let $\Theta\in\Aut(\pi\inv(U))$ be a lift of $\theta$. Then $\Theta$ sends $H$-orbits to $H$-orbits and for $h\in H$, $\Theta\circ h\circ\Theta\inv$ induces the identity on $U$, hence it must be an element $\sigma(h)$ of $H$ where $\sigma$ is an automorphism of $H$. Thus $\Theta$ is $\sigma$-equivariant.  
\end{remark}

\begin{corollary}\label{cor:DP.G0}
Let $Y=\quot V{G^0}$ and let $\rho\colon Y\to Z=\quot VG$ be the canonical map. Suppose that $Y$ has no codimension one strata. If $(V,G^0)$ has the LP and  $Y_\pr$ is simply connected, then $(V,G)$ has the LP (and DP).
\end{corollary}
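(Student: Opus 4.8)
The plan is to lift $\theta\in\Aut(Z)$ in two stages along the factorization $V\xrightarrow{\pi_0}Y=\quot V{G^0}\xrightarrow{\rho}Z=\quot VG$, in which $\rho$ is the quotient map for the finite group $\Gamma=G/G^0$. First I would construct a strata preserving automorphism $\bar\theta\in\Aut(Y)$ with $\rho\circ\bar\theta=\theta\circ\rho$, and then apply the hypothesis that $(V,G^0)$ has the LP to lift $\bar\theta$ to some $\Theta\in\Aut(V)$ with $\pi_0\circ\Theta=\bar\theta\circ\pi_0$. Writing $\pi=\rho\circ\pi_0$ for the quotient $V\to Z$, one then gets $\pi\circ\Theta=\theta\circ\pi$, so $\Theta$ is a lift of $\theta$; hence $(V,G)$ has the LP, and the DP follows at once from Proposition \ref{prop:LP.implies.DP}.

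The heart of the matter is the first stage, which I would carry out by a covering space construction over the principal stratum followed by a Hartogs type extension. Put $Y_\pr^\circ=\rho\inv(Z_\pr)$. A point of $Z_\pr$ is the image of a $G$-principal orbit $G/H$, and its $\rho$-fibre is the finite set of closed $G^0$-orbits it contains, of cardinality $|G^0\backslash G/H|$ independent of the point; thus $\rho\colon Y_\pr^\circ\to Z_\pr$ is a finite holomorphic map of constant fibre cardinality between complex manifolds, hence an unramified covering. Moreover $Y_\pr^\circ\subseteq Y_\pr$, and $Y\setminus Y_\pr^\circ$ has codimension at least two in $Y$: the non-principal strata of $Y$ have codimension $\geq 2$ by hypothesis, while $Y_\pr\setminus Y_\pr^\circ=Y_\pr\cap\rho\inv(Z\setminus Z_\pr)$ has codimension $\geq 2$ because $\rho$ is finite and $Z$, like $Y$, has no codimension one strata. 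Deleting this codimension $\geq 2$ set leaves $Y_\pr^\circ$ connected and simply connected (by the hypothesis on $Y_\pr$), so it is a universal cover of $Z_\pr$. Since $\theta$ preserves $Z_\pr$, the lifting criterion is vacuously satisfied and produces a lift of $\theta|_{Z_\pr}$; lifting $\theta\inv$ as well and correcting by a deck transformation yields a biholomorphism $\tilde\theta\in\Aut(Y_\pr^\circ)$ with $\rho\circ\tilde\theta=\theta\circ\rho$. Finally, because $\rho$ is finite and $\theta\circ\rho$ is defined on all of $Y$, the map $\tilde\theta$ is locally bounded near $Y\setminus Y_\pr^\circ$, so by normality of $Y$ it extends across this codimension $\geq 2$ set to an automorphism $\bar\theta$ of $Y$ still satisfying $\rho\circ\bar\theta=\theta\circ\rho$.

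It then remains to check that $\bar\theta$ is strata preserving for the Luna stratification of $Y=\quot V{G^0}$, which is exactly what legitimizes the appeal to the LP of $(V,G^0)$. Here I would use that $\bar\theta$ carries $\rho\inv(Z_{(K)})$ to itself for every stratum $Z_{(K)}$ of $Z$, together with the special structure of $G^0=\C^*$: the isotropy group of a closed $G^0$-orbit is a subgroup of $\C^*$, hence cyclic and characteristic, so the conjugation action of $\Gamma$ cannot move the isotropy-type label of a $G^0$-stratum. The only way $\Gamma$ can interchange two $G^0$-strata lying over a single stratum of $Z$ is therefore by sending a slice representation to its dual, and since a slice representation and its dual yield $G^0$-biholomorphic quotient models, this interchange is compatible with the local lifts underlying the LP; I would thus argue that $\bar\theta$ lies in $\Aut(Y)$, or, failing literal stratum preservation, that the LP lift of $\bar\theta$ exists nonetheless.

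The step I expect to be the main obstacle is precisely this last verification that the lift $\bar\theta$ is a \emph{strata preserving} automorphism of $Y$, rather than one that permutes $G^0$-strata within their $\Gamma$-orbits; the covering and extension arguments of the second paragraph are essentially formal once the two codimension bounds are in place. Controlling the $\Gamma$-permutation of the $G^0$-strata over a fixed stratum of $Z$ is where the condition $\dim G\leq1$, through the characteristic nature of the subgroups of $G^0=\C^*$, does the real work, and where the interaction between the slice-representation labels and the LP of $(V,G^0)$ must be handled with care.
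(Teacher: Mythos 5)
Your two-stage plan (lift $\theta$ to an automorphism of $Y$ through the finite quotient $\rho$, then invoke the LP of $(V,G^0)$) is exactly the paper's strategy, and your second stage and the extension across a codimension-two set are fine. But your first stage rests on a false claim: that ``$Z$, like $Y$, has no codimension one strata,'' and hence that $Y_\pr\setminus Y_\pr^\circ=Y_\pr\cap\rho\inv(Z\setminus Z_\pr)$ has codimension at least two. The Luna stratification of $Z=\quot VG$ is in general strictly finer than anything visible in $Y=\quot V{G^0}$, because the finite part $G_v/G^0_v$ of an isotropy group can jump along a hypersurface on which the $G^0$-isotropy does not change at all. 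Concretely, let $G=\C^*\times\Z/2$ and let $V$ have two coordinates $x_1,x_2$ of weight $1$, two coordinates $y_1,y_2$ of weight $-1$, and one coordinate $z$ of weight $0$ on which $\Z/2$ acts by the sign. Then $Y\cong C\times\C_z$, where $C=\{a_{11}a_{22}=a_{12}a_{21}\}$ via $a_{ij}=x_iy_j$; the strata of $Y$ are $\{a\neq 0\}$ and $\{a=0\}$, of codimensions $0$ and $3$, and $Y_\pr=(C\setminus\{0\})\times\C$ is simply connected, so the hypotheses of the corollary hold. However $Z=C\times\C_{z^2}$ has the codimension-one stratum $\{a\neq 0,\,z=0\}$, with isotropy group $\{1\}\times\Z/2$. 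Consequently $Y_\pr^\circ=Y_\pr\setminus\{z=0\}$ has complement of codimension one in $Y_\pr$ and $\pi_1(Y_\pr^\circ)\cong\Z$: it is not a universal cover of $Z_\pr$, the lifting criterion is not ``vacuously satisfied,'' and your construction of $\tilde\theta$ collapses.

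The paper's proof avoids precisely this trap by never restricting to the locus over $Z_\pr$: it covers all of $Y_\pr$ by $G/G^0$-invariant open sets carrying local lifts of $\theta$ (local lifting through a finite, possibly \emph{branched}, quotient), and the obstruction to gluing these lifts is a cocycle in $H^1(Y_\pr,G/G^0)$, which vanishes because $Y_\pr$ itself --- the set the hypothesis actually asserts is simply connected --- is simply connected; the glued lift then extends over $Y\setminus Y_\pr$ by normality. That is the repair your argument needs: work with the possibly branched cover $Y_\pr\to\rho(Y_\pr)$ and \v{C}ech gluing, not with the unramified cover over $Z_\pr$. A secondary problem is your final paragraph: the verification that $\bar\theta$ preserves the strata of $Y$ is left professedly inconclusive (``I would thus argue \dots or, failing literal stratum preservation \dots''), and it leans on $G^0=\C^*$, which is not a hypothesis of the corollary (Section 3 of the paper concerns general reductive $G$); the paper deduces strata preservation of its lift directly from the fact that $\theta$ preserves the strata of $Z$, and no appeal to $\dim G\leq 1$ is needed anywhere in this proof.
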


\begin{proof}
Let $\theta\in\Aut(Z)$. Since $\rho$ is the quotient by a finite group, there is an open cover $U_i$ of $Y_\pr$ by $G/G^0$-invariant open sets  and $\theta_i\in\Aut( U_i)$ which cover $\theta$.  The compositions $\theta_i\circ\theta_j\inv$ take values in $G/G^0$, so we obtain a cocycle in $H^1(Y_\pr,G/G^0)$. This corresponds to a covering space of $Y_\pr$, which is trivial since $Y_\pr$ is simply connected. Hence changing the $\theta_i$ by elements of $G/G^0$, we construct a lift $\tilde\theta$ of $\theta$ to $Y_\pr$. Since $Y$ is normal and $Y\setminus Y_\pr$ has codimension at least two, $\tilde\theta$ extends to $Y$. Then $\tilde\theta$ preserves the strata of $Y$ since $\theta$ preserves the strata of $Z$.  Since $(V,G^0)$ has the LP, $\tilde\theta$ lifts to $\Theta\in\Aut(V)$ which is a lift of $\theta$.
\end{proof}

\begin{theorem}\label{thm:lift.finite}
Suppose that $G$ is finite and $\phi\colon X/G\to V/G$ is a strata preserving biholomorphism. Then, perhaps changing $\phi$ by an element of $\Aut_\ql(Z)$,  $\phi$ has a $G$-equivariant biholomorphic lift $\Phi\colon X\to V$.  The original $\phi$ has a $\sigma$-equivariant lift for some $\sigma\in\Aut(G)$.
\end{theorem}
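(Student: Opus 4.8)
The plan is to recognize that, for finite $G$, the module $V$ satisfies the hypotheses of Theorem \ref{thm:easy} — the ILP and the DP — and then to bootstrap the conclusion of that theorem into the $\sigma$-equivariant lift of the \emph{original} $\phi$. The DP is immediate: by Proposition \ref{prop:lift.finite} the finite-group module $V$ has the LP, and by Proposition \ref{prop:LP.implies.DP} the LP implies the DP. So the real work is the ILP.

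To establish the ILP, let $B\in\A(Z)$ be strata preserving, where $Z=\quot VG$ and $\pi\colon V\to Z$. Since $G$ is finite, $\pi$ restricts to an unramified covering $V_\pr=\pi\inv(Z_\pr)\to Z_\pr$ over the principal stratum, so $B$ lifts uniquely along each local sheet, and uniqueness forces these lifts to patch to a single $G$-invariant vector field $A$ on $V_\pr$ with $\pi_*A=B$ there. The issue is to extend $A$ holomorphically across $V\setminus V_\pr=\bigcup_{g\neq e}V^g$. The components of codimension $\geq 2$ cause no trouble: once the codimension-one components are handled, Hartogs' theorem extends $A$. So the heart of the matter is extension across a reflection hyperplane.

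Here the strata preserving hypothesis is exactly what is needed. On a dense open subset of such a component the isotropy is the cyclic group $\Z_m$ generated by the pseudoreflections fixing the hyperplane, and there are local coordinates with $V\cong\C^{n-1}\times\C$ on which $\Z_m$ acts only on the last factor by a primitive $m$th root of unity; thus $Z\cong\C^{n-1}\times\C$ with coordinates $(z',t)$, $t=w^m$, the component mapping to the smooth codimension-one stratum $\{t=0\}$. Writing $B=\sum_i b_i\,\partial_{z'_i}+c\,\partial_t$, the condition that $B$ preserve $\{t=0\}$ forces $c=t\,c_1$ with $c_1$ holomorphic. Since $\partial_t=\frac1m w^{1-m}\partial_w$, the lift of the last term is $t\,c_1\cdot\frac1m w^{1-m}\partial_w=\frac1m w\,c_1\,\partial_w$, which is holomorphic (and manifestly $\Z_m$-invariant). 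Hence $A$ extends across the reflection hyperplanes, and then across the residual codimension $\geq 2$ locus by Hartogs, giving $A\in\A(V)^G$ with $\pi_*A=B$. Thus $V$ has the ILP.

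With the ILP and DP in hand, Theorem \ref{thm:easy} yields $\psi\in\Aut_\ql(Z)$ and a $G$-equivariant biholomorphic lift $\Phi_0\colon X\to V$ of $\psi\circ\phi$, which is the first assertion. For the last sentence, apply the LP (Proposition \ref{prop:lift.finite}) to $\psi\inv\in\Aut(Z)$ to get a lift $\Psi\in\Aut(V)$, which by Remark \ref{rem:sigma.equivar} is $\sigma$-equivariant for some $\sigma\in\Aut(G)$. Then $\Phi:=\Psi\circ\Phi_0\colon X\to V$ covers $\psi\inv\circ(\psi\circ\phi)=\phi$, and since $\Phi_0$ is $G$-equivariant while $\Psi$ is $\sigma$-equivariant, $\Phi$ is $\sigma$-equivariant. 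The main obstacle is the ILP, and within it the extension of the lifted invariant field across the codimension-one reflection strata: the strata preserving condition is precisely what converts an a priori meromorphic lift into a holomorphic one. Everything else is a matter of assembling the cited properties.
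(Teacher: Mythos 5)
Your proposal is correct, and its overall skeleton is exactly the paper's: establish the ILP and DP for $(V,G)$, feed them into Theorem \ref{thm:easy} to get the $G$-equivariant lift of the modified $\phi$, and then obtain the $\sigma$-equivariant lift of the original $\phi$ by composing with a lift (Proposition \ref{prop:lift.finite}, Remark \ref{rem:sigma.equivar}) of the correcting element of $\Aut_\ql(Z)$. The one place you diverge is the ILP: the paper gets it in one stroke from Proposition \ref{prop:ILP}(2), since for finite $G$ the identity component $G^0$ is trivial and $(V,\{e\})$ trivially has the ILP, whereas you reprove it directly --- unique lifting over the principal stratum, where $\pi$ is a local biholomorphism, then extension across the generic points of the reflection hyperplanes via the local model $t=w^m$ (where strata preservation forces $c=t\,c_1$ and kills the pole $w^{1-m}$), then Hartogs across the remaining codimension $\geq 2$ locus. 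Your argument is essentially the content of the result of \cite{Schwarz1980} that underlies Proposition \ref{prop:ILP}; it buys a self-contained proof that makes visible exactly where the strata-preserving hypothesis is used, at the cost of redoing a cited result. Both assemblies of the final $\sigma$-equivariance are the same and both are correct (note only that Remark \ref{rem:sigma.equivar} needs $U$ connected, which holds here since $U=Z$ is the quotient of the connected $V$).
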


\begin{proof}
This follows from Propositions \ref{prop:ILP} and \ref{prop:lift.finite}, Theorem \ref{thm:easy}  and Remark \ref{rem:sigma.equivar}.
\end{proof}

\section{The easy cases}\label{sec:easy}
We return to the case where $G^0=\C^*$.
 In this section we consider the ``easy''  cases where  
 \begin{enumerate}
\item   all the weights of $G^0$ on $V$ have the same sign, or 
\item there are at least two weights of both signs.
\end{enumerate}
 Case (1) is not difficult and in Case (2)  we can manage to establish the ILP and DP  by eliminating codimension one strata in $\quot V{G^0}$.  In  the remaining ``hard'' case   of Section \ref{sec:hard}  there is only one positive weight or one negative weight and we have to argue quite differently.

\begin{theorem}\label{thm:main.nonstable}
If   the weights of $G^0$ on $V$ are all positive or   all negative, then  Theorem \ref{thm:main} holds.
\end{theorem}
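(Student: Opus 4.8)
The plan is to reduce everything to Theorem \ref{thm:easy} by showing that, under the sign hypothesis on the weights, the quotient $Z = \quot VG$ collapses to a single point, so that the ILP and DP hold for entirely trivial reasons.

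First I would analyze the invariants. Write $V = \bigoplus_n V_n$ for the weight-space decomposition of the $G^0 = \C^*$-action, where $t\in\C^*$ acts on $V_n$ by $t^n$. Under the hypothesis every weight $n$ with $V_n \neq 0$ has the same nonzero sign, so every coordinate function on $V$ has nonzero weight, and hence every nonconstant monomial in $\OO(V) = \Sym(V^*)$ has nonzero weight (a sum of weights all of one sign). Therefore $\OO(V)^{G^0} = \C$, and a fortiori $\OO(V)^G \subseteq \OO(V)^{G^0} = \C$, so $Z = \quot VG$ is a single reduced point $*$. Via the stratified biholomorphism $\phi$, the quotient $\quot XG$ is then a point as well.

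Next I would observe that on a single reduced point all the structures entering Theorem \ref{thm:easy} are trivial: $\A(Z) = 0$, so the ILP holds vacuously (the only $B \in \A(Z)$ is $0 = \pi_*(0)$), and $\Aut(Z) = \{\id\}$, so the DP holds vacuously as well; in particular $\Aut_\ql(Z)$ is trivial, so no adjustment of $\phi$ is required. With the ILP and DP in hand, Theorem \ref{thm:easy} applies directly and produces a $G$-equivariant biholomorphic lift $\Phi\colon X \to V$ of $\phi$, which is exactly the conclusion of Theorem \ref{thm:main} in this case.

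There is essentially no hard step here; the only thing to get right is the bookkeeping that forces the quotient to be a point, after which the conclusion is inherited wholesale from Theorem \ref{thm:easy}. If one preferred a self-contained argument avoiding the ILP/DP formalism, the alternative would be to note that matching of Luna labels forces $X$ to carry a $G$-fixed point $x_0$ with slice representation $T_{x_0}X \cong V$, and then to globalize Luna's slice theorem using the contracting $\C^*$-flow (all weights of one sign send every point toward $x_0$); but routing through Theorem \ref{thm:easy} is cleaner and avoids any globalization estimates, so that is the path I would take.
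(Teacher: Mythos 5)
There is a genuine gap, and it is right at the start: your claim that ``every coordinate function on $V$ has nonzero weight'' silently assumes $V^{G^0}=(0)$. The hypothesis of Theorem \ref{thm:main.nonstable}, as the paper uses it, only constrains the signs of the \emph{nonzero} weights; a zero-weight space is allowed, and indeed must be allowed, since Theorems \ref{thm:main.nonstable}, \ref{thm:two.pos.neg} and \ref{thm:last.case} are together supposed to exhaust all cases of Theorem \ref{thm:main} for $\dim G=1$. A module whose nonzero weights are all positive but with $V^{G^0}\neq(0)$ is covered by none of the other cases, so it must fall under this theorem (compare also Section \ref{sec:hard}, where the weight-zero summands $V^G$ and $W_0$ appear explicitly). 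Once $V^{G^0}\neq(0)$, your computation fails: $\OO(V)^{G^0}\simeq\OO(V^{G^0})\neq\C$, and since all closed $G$-orbits lie in $V^{G^0}$, the quotient is $Z\simeq V^{G^0}/(G/G^0)$, which is positive-dimensional in general. Then $\A(Z)\neq 0$ and $\Aut(Z)$ is far from trivial, so nothing about the ILP or DP is vacuous, and both your main route through Theorem \ref{thm:easy} and your suggested alternative (a single fixed point with slice representation $V$, globalized by the contracting flow) collapse. Your argument is correct only in the special sub-case $V^{G^0}=(0)$.

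For comparison, the paper's proof is built precisely around the nontrivial fixed-point set. Since the nonzero weights all have one sign, every orbit closure meets $X^{G^0}$ (resp.\ $V^{G^0}$), so all closed orbits lie there and $\phi$ restricts to a stratified biholomorphism $X^{G^0}/G\to V^{G^0}/G$; these are quotients by the finite group $G/G^0$, so Theorem \ref{thm:lift.finite} produces (after modifying $\phi$) an equivariant lift $X^{G^0}\to V^{G^0}$. One then uses the equivariant Oka principle of Heinzner--Kutzschebauch to identify the normal bundle of $X^{G^0}$ with the trivial bundle $V_0\times V^{G^0}$, and \cite[Proposition 7]{KLSb} to extend to a $G$-biholomorphism of $G$-saturated neighborhoods of $X^{G^0}$ and $V^{G^0}$. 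The same one-signedness of the weights that you invoked then pays off globally: the only $G$-saturated neighborhood of $X^{G^0}$ is $X$ itself (and likewise for $V$), so this neighborhood map is already the desired global lift $\Phi\colon X\to V$. To repair your write-up you would need essentially this argument; the point case you treated is the degenerate instance where $X^{G^0}$ and $V^{G^0}$ are single points.
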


\begin{proof}
All the closed $G$-orbits of $X$  lie in $X^{G^0}$ and similarly for $V$. Thus $\phi$ is a strata preserving biholomorphism of $X^{G^0}/G$ with $V^{G^0}/G$. By Theorem \ref{thm:lift.finite}, after modifying $\phi$ by an element of $\Aut(Z)$, there is an equivariant biholomorphic lift $\Phi\colon X^{G^0}\to V^{G^0}$. Let $V_0$ be a $G$-complement to $V^{G^0}$ in $V$.   By \cite{Heinzner-Kutzschebauch} the normal bundle of $X^{G^0}$ in $X$ is isomorphic to the normal bundle $V_0\times V^{G^0}\to V^{G^0}$ of $V^{G^0}$ in $V$. By \cite[Proposition 7]{KLSb} there is a $G$-biholomorphism $\Phi$ of a $G$-saturated neighborhood    of $X^{G^0}$ to a $G$-saturated neighborhood   of $V^{G^0}$ which induces $\phi$ on $X^{G^0}$. But a $G$-saturated neighborhood of $X^{G^0}$ is all of $X$ and similarly for $V^{G^0}$. Hence $\Phi\colon X\to V$ is a $G$-equivariant biholomorphic lift of $\phi$.
\end{proof}

\begin{theorem}\label{thm:two.pos.neg}
If    the action of $G^0$ on $V$ has at least two strictly positive weights and at least two strictly negative weights, then Theorem \ref{thm:main} holds.
\end{theorem}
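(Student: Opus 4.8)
The plan is to reduce the two-sided-weights case to the framework provided by Theorem~\ref{thm:easy}, namely to verify that $V$ has both the ILP and the DP, since then the lift of $\phi$ follows immediately. By Proposition~\ref{prop:ILP} we may assume $V^G=(0)$, and by Proposition~\ref{prop:ILP}(2) and the remark following the nullcone definition we may first try to pass to $G^0=\C^*$ and then recover the full group $G$ via Corollary~\ref{cor:DP.G0}. So the core of the matter is the torus case $G=\C^*$ with at least two positive and at least two negative weights. First I would check that under this hypothesis the hypotheses of Proposition~\ref{prop:ILP.tori} are met: one needs $\dim Z\geq 3$ and that the nullcone $\NN(V)$ has codimension at least $2$ in $V$. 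With at least two weights of each sign, the nullcone (the common zero locus of all invariant monomials) is cut out by enough equations that its codimension is at least $2$, and $\dim Z=\dim V-1\geq 3$ once $V$ has at least four nontrivial weights. Thus Proposition~\ref{prop:ILP.tori} gives the ILP for $(V,\C^*)$, and then Proposition~\ref{prop:ILP}(2) promotes this to the ILP for $(V,G)$.

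For the DP the strategy is different: I would obtain it as a consequence of the LP via Proposition~\ref{prop:LP.implies.DP}, and establish the LP using Corollary~\ref{cor:DP.G0}. This requires setting $Y=\quot V{G^0}$ and checking its two hypotheses. The phrase in the section preamble—``we can manage to establish the ILP and DP by eliminating codimension one strata in $\quot V{G^0}$''—signals that the real work is arranging that $Y$ has \emph{no} codimension-one strata, which is precisely the hypothesis Corollary~\ref{cor:DP.G0} needs. Here the two-weights-of-each-sign assumption is essential: a codimension-one stratum in $Y$ would come from a subtorus (necessarily finite, hence from the finite part, or from $\C^*$ fixing a large coordinate subspace) whose fixed space in $V$ has codimension one in $V$, i.e.\ exactly one weight is nonzero on it; having at least two weights of each sign rules out such configurations, so I would enumerate the possible isotropy subgroups of $G^0$ and show each fixes a subspace of codimension at least $2$. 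I also need $(V,G^0)$ to have the LP and $Y_\pr$ simply connected.

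The LP for $(V,G^0)=(V,\C^*)$ and the simple connectivity of $Y_\pr$ are the two remaining ingredients feeding Corollary~\ref{cor:DP.G0}, and I expect the LP for the $\C^*$-quotient to be the main obstacle. For simple connectivity, $Y_\pr$ is an open dense smooth stratum of the quotient of a $\C^*$-module with generically finite (in fact trivial, after the Luna--Richardson reduction) isotropy; because the principal stratum is the quotient of a Zariski-open subset of $V$ by a connected group and has complement of codimension at least two, its fundamental group can be computed and shown trivial. The genuinely delicate point is the LP: lifting an arbitrary strata-preserving automorphism $\theta$ of $Z$ (or of $Y$) to an automorphism of $V$ that need not be equivariant. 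I would build the lift locally using the slice theorem over the principal and one-dimensional-isotropy strata—where the local model is a product and lifting is transparent—and then glue, with the codimension-two complement and normality of $V$ ensuring the local lifts extend across the bad set by a Hartogs-type argument as in the proof of Proposition~\ref{prop:ILP.tori}. The subtlety, and where I anticipate spending the most effort, is controlling the gluing cocycle so that the local lifts patch to a global biholomorphism rather than merely a global \emph{equivariant} vector field; here the absence of codimension-one strata and the two-sided weight condition are what keep the relevant obstruction group from obstructing.
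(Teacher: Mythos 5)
Your treatment of the ILP is fine and matches the paper: with at least two weights of each sign the nullcone has codimension at least $2$, $\dim Z\geq 3$, and Propositions \ref{prop:ILP} and \ref{prop:ILP.tori} give the ILP for $(V,G^0)$ and then for $(V,G)$. The gap is in the LP/DP half. Your central claim --- that having at least two weights of each sign rules out codimension-one strata in $Y=\quot V{G^0}$, so that Corollary \ref{cor:DP.G0} applies to $(V,G)$ directly --- is false. A codimension-one stratum of $Y$ occurs whenever $n-1$ of the weights have a common divisor $r>1$ that is prime to the remaining weight, irrespective of the signs of the weights; such a stratum does not come from a coordinate hyperplane being a fixed-point set of $G^0$ itself, but from a \emph{finite} cyclic subgroup of $G^0$ acting nontrivially on exactly one coordinate. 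The paper's own example has weights $\{-2,-6,3,6\}$, two of each sign: every invariant lies in $\C[x_1^3,x_2,x_3^2,x_4]$, a generic point of $\{x_1=0\}$ has isotropy $\Z/3\Z$ (the other three weights are divisible by $3$), and a generic point of $\{x_3=0\}$ has isotropy $\Z/2\Z$; both images are codimension-one strata of $Y$. For the same reason your assertion that $\pi_1(Y_\pr)$ can be ``shown trivial'' also fails for such $V$: when $Y\setminus Y_\pr$ has codimension one, $V_\pr$ is contained in the complement of a hypersurface in $V$ and the fibration argument breaks down; the paper explicitly warns that simple connectivity of $Y_\pr$ ``may be false''.

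What the paper actually does, and what is missing from your proposal, is a reduction that \emph{removes} the codimension-one strata rather than denying their existence. For each such stratum, with associated integer $r$, one divides by $F_r=\Z/r\Z\subset G^0$; letting $F$ be the product of all the $F_r$ (which is normal in $G$), the quotient $V'=V/F$ is again a linear $\C^*$-module (same weights except the offending one replaced by $rd$), and $Y'=\quot{V'}{(G^0/F)}$ has no codimension-one strata. Now the fibration $\C^*\to V'_\pr\to Y'_\pr$ with $V'_\pr$ simply connected gives $\pi_1(Y'_\pr)=1$, and Corollary \ref{cor:DP.G0} yields the LP for $(V',G/F)$. Then --- the second idea you would need --- one lifts in two stages: any $\theta\in\Aut(Z)$ is also an automorphism of $Z'=\quot{V'}{(G/F)}$, which is the same variety as $Z$ with a coarser stratification, so $\theta$ lifts to $\Theta'\in\Aut(V')$; and $\Theta'$, which preserves the strata of $V'$ viewed as the finite quotient $V/F$, lifts to $\Theta\in\Aut(V)$ by Theorem \ref{thm:lift.finite}. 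This gives the LP, hence the DP, for $(V,G)$. Without the $V/F$ device your appeal to Corollary \ref{cor:DP.G0} has no valid input, and your fallback plan --- building the LP for $(V,\C^*)$ by local slice-theorem lifts glued via a cocycle and Hartogs argument --- is precisely the hard point that the paper's construction is designed to circumvent; as stated it is a sketch, not a proof.
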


\begin{proof} It is enough to establish the ILP and DP. For the ILP we may reduce to the case of $(V,G^0)$ and assume that $V^{G^0}=(0)$. Then the null cone $\NN(V)$  has codimension at least two in $V$ and by Propositions \ref{prop:ILP} and \ref{prop:ILP.tori}, $(V,G^0)$ and $(V,G)$ have  the ILP. To establish the LP (hence DP) it suffices to prove that $(V,G^0)$ has the LP and that $Y_\pr$ is simply connected, where $Y=\quot V{G^0}$ (the latter may be false, but we get around this). We may assume that $V^{G^0}=(0)$.

Let $p_1,\dots,p_k$ be the   positive weights of $V$ and let $q_1,\dots,q_l$ be the   negative weights with $k+l=n$. We may assume that the GCD of the $p_i$ and $q_j$ is $1$. We first deal with the codimension one strata of $Y$.  A codimension one stratum occurs whenever there are $n-1$ weights with GCD $r$ such that the remaining weight is prime to $r$. Thus the variable $x$ corresponding to the remaining weight, call it $d$, always occurs in any invariant to a power a multiple of $r$. Now let $F_r$ be   $\Z/r\Z\subset G^0$. Then the quotient of $V$ by $F_r$, call it $V'$, has the same weights as $V$, except that $d$ has been replaced by $rd$. This eliminates the corresponding codimension one stratum. We can do the same process for any other codimension one stratum of $Y$. Let $F$ be the product of all the $F_r$. Note that if $G^0$ is not in the center of $G$, then the weights of $V$ occur in pairs $\pm k$ in which case there are no codimension one strata. Hence if $F$ is not trivial, then $F$ is a normal subgroup of $G$.

\begin{example}
Suppose that $V$ has coordinate functions $\{x_1,\dots,x_4\}$  of weights $\{-2,-6,3,6\}$. Any invariant lies in $\C[x_1^3,x_2,x_3^2,x_4]$. Thus $F=\Z/6\Z\subset\C^*$ and $V'=V/F$ is $\C^4$ with weights $\pm 6$ each with multiplicity $2$.
\end{example}

Set $V/F=V'$ (considered as a vector space) and let $G'=G/F$. Then $Y'=\quot {V'}{(G^0/F)}$ has no codimension one strata. Dividing by a finite subgroup we can make the action of $G^0/F$ on $V'$ faithful. Then we have a fibration $\C^*\to V'_\pr\to Y'_\pr$. Since $V'_\pr$ is simply connected the exact homotopy sequence gives $0\to\pi_1(Y'_\pr)\to\pi_0(\C^*)$, hence $\pi_1(Y'_\pr)$ is trivial. Now we can apply Corollary \ref{cor:DP.G0} to establish the LP for $(V',G/F)$.

Now $Z'=\quot{V'} {G'}$ is the same variety as $Z=\quot V G$, but it has a coarser stratification since it is missing some codimension one strata. Let $\theta\in\Aut(Z)$. Then $\theta\in\Aut(Z')$ so it has a lift $\Theta'\in\Aut(V')$. Since $\theta$ preserves the  strata of $Z$, the  lift $\Theta'$ preserves the  strata of $V'$ considered as the quotient $V/F$. By Theorem \ref{thm:lift.finite}, $\Theta'$ lifts to $\Theta\in\Aut(V)$ and $(V,G)$ has the LP, hence the DP.
\end{proof}

\section{The hard case}\label{sec:hard}
 
\begin{theorem}\label{thm:last.case}
 If $G^0$ acts with weights of both signs, but only one strictly positive weight or only one strictly negative weight, then Theorem \ref{thm:main} holds.
\end{theorem}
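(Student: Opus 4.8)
The plan is to reduce, as in the previous cases, to a pair $(V,G^0)$ with $G^0=\C^*$ and $V^{G^0}=(0)$, and then to exploit the very special geometry of a $\C^*$-module with a single positive weight. Carrying the finite part $G/G^0$ along as in Corollary \ref{cor:DP.G0} and Remark \ref{rem:sigma.equivar}, and splitting off the flat summand $V^{G^0}$ (cf.\ Proposition \ref{prop:ILP}), it suffices to treat $G=G^0=\C^*$ with $V^{G^0}=(0)$. Replacing the action by its inverse if necessary, I assume there is a single positive weight $p$, carried by a coordinate $x$, and negative weights $-r_1,\dots,-r_l$ carried by $y_1,\dots,y_l$, with $\gcd(p,r_1,\dots,r_l)=1$. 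Slicing by the transversal $\{x=1\}$, on which the residual isotropy is $\mu_p$, identifies the quotient $Z=\quot V{\C^*}$ with the \emph{finite} quotient $\C^l/\mu_p$, matching strata; moreover the only closed orbit with positive–dimensional (hence full $\C^*$) isotropy is the fixed point $0$, so that $Z\setminus\{*\}$, where $*=\pi(0)$, carries only finite isotropy groups.

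The decisive difference from Theorems \ref{thm:main.nonstable} and \ref{thm:two.pos.neg} is that the nullcone $\NN(V)=\{x=0\}\cup\{y=0\}$ now has the codimension–one component $\{x=0\}$. Consequently the Hartogs step of Proposition \ref{prop:ILP.tori} is unavailable, and in fact the ILP itself fails in this range (one checks directly that a strata–preserving field on $Z$ obtained from an invariant linear field in the $y$–variables on the slice need not be $\pi_*(A)$ for any $A\in\A(V)^{\C^*}$). I therefore abandon Theorem \ref{thm:easy} and instead aim to verify the hypothesis of the first theorem quoted from \cite{KLSb} in the Introduction, namely that $X$ and $V$ are \emph{locally $G$–biholomorphic over the common quotient} $Z$. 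Granting this, that theorem produces the desired global lift $\Phi$, and hence Theorem \ref{thm:main} in the present case.

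Away from $*$ this is routine. Over $Z\setminus\{*\}$ every closed orbit has finite isotropy, and since $\phi$ is stratified the slice representations of $X$ and $V$ agree along each stratum; the holomorphic slice theorem together with the finite–group lifting property (Proposition \ref{prop:lift.finite}) then furnish $G$–biholomorphic lifts of $\phi$ over an open cover of $Z\setminus\{*\}$. The whole problem is thus concentrated at the single point $*$: I must produce one lift over a saturated neighborhood $U_*$ of $*$ that descends to $\phi|_{U_*}$. Here the holomorphic slice theorem at the fixed point $x_0\in X$ lying over $*$ applies (\cite{Snow}); because $*$ is the deepest stratum, stratification of $\phi$ forces the slice representation $T_{x_0}X$ to be isomorphic to $V$, so a saturated neighborhood of $x_0$ is $\C^*$–biholomorphic to one of $0$ in $V$. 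This lift covers \emph{some} stratified biholomorphism $\psi$ of neighborhoods of $*$, and what remains is to correct it into a lift covering $\phi$; equivalently, one must lift the stratified automorphism $\psi^{-1}\circ\phi$ of a neighborhood of $*$ to $V$.

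The main obstacle is precisely this last step — a \emph{local lifting property at $*$} for $(V,\C^*)$ — and it is where the codimension–one component $\{x=0\}$ makes the argument delicate. My approach would be to use the weight–$p$ semi-invariant to reduce, over $\{x\neq0\}$, to a $\mu_p$–equivariant problem on the slice $\{x=1\}\cong\C^l$, where the finite–group lifting property applies, and then to spread the resulting map out $\C^*$–equivariantly. The difficulty is that the spread–out map is a priori only defined on $\{x\neq0\}$, and its $y$–components, being semi-invariants, may acquire poles along $\{x=0\}$; the heart of the proof is to show that, after adjusting $\phi$ by an element of $\Aut_\ql(Z)$, these poles can be eliminated so that the lift extends holomorphically across $\{x=0\}$ and over $*$. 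Once the local lift over $U_*$ is in hand, the local lifts over $Z\setminus\{*\}$ complete an open cover of $Z$, so $X$ and $V$ are locally $G$–biholomorphic over $Z$ and \cite{KLSb} yields $\Phi$; reinstating $V^{G^0}$ and the finite group $G/G^0$ as in Corollary \ref{cor:DP.G0} and Remark \ref{rem:sigma.equivar} finishes the proof.
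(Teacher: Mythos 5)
Your overall architecture (produce local $G$-equivariant lifts of $\phi$ over an open cover of $Z$, then glue by the theorem of \cite{KLSb} quoted in the introduction) is viable and close in spirit to the paper's, and you have correctly located the difficulty: poles along $\{x=0\}$ when a lift constructed on the slice $\{x=1\}$ is spread out $\C^*$-equivariantly. But the proposal stops exactly there. The statement ``the heart of the proof is to show that \dots these poles can be eliminated'' is an announcement, not an argument, and this step is precisely the technical core of the paper: Propositions \ref{prop:local.lift} and \ref{prop:local.lift.G0}, proved via Lemma \ref{lem:eval.x0} (restriction to $\{x_0\}\times V'$ identifies $\C[V]^G$ with $\C[V']^H$), the finite-group lifting of Proposition \ref{prop:lift.finite}, and then Lemma \ref{lem:lifting.Theta'}. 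Note also that the correction needed there is \emph{not} by an element of $\Aut_\ql(Z)$ applied to $\phi$, as you suggest; the paper must analyze which rational covariants (``bad'' monomials, of which there are only finitely many by Corollary \ref{cor:good.vars}) can appear in the Taylor expansion of the spread-out lift, and kill them one weight space at a time by composing with de Jonqui\`ere-type meromorphic $G$-automorphisms $\Lambda$ of $V$ that nevertheless induce honest strata-preserving automorphisms $\lambda\in\Aut(Z)$. Without some version of this analysis your proof has no content at the deepest stratum, which is the only place the ``hard case'' differs from the easy ones.

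The opening reduction is also not justified. Corollary \ref{cor:DP.G0} cannot be used to dispose of the finite part $G/G^0$ here, since its hypotheses fail in exactly this case: with a single positive weight $d$, the fibration $\C^*\to V_\pr\to Y_\pr$ gives $\pi_1(Y_\pr)\simeq\Z/d\Z$, so $Y_\pr$ need not be simply connected, and codimension-one strata cannot be removed by the trick of Theorem \ref{thm:two.pos.neg} without changing the problem. Likewise you cannot ``split off the flat summand $V^{G^0}$'': Proposition \ref{prop:ILP} splits off $V^G$ and only for the ILP, while $X$ is an arbitrary Stein $G$-manifold with no product structure. When $V^{G^0}\neq(0)$ the locus of closed orbits with infinite isotropy is the positive-dimensional set $X^{G^0}$, so the problem is not concentrated at the single point $*$, and your slice-theorem argument at a fixed point does not cover it. The paper instead keeps $G/G^0$ and the zero-weight space $W_0$ throughout (using that $G$ centralizes $G^0$ in this case, and $\sigma$-equivariance as in Remark \ref{rem:sigma.equivar}), handles a whole neighborhood of $X^{G^0}$ via the equivariant Oka principle of \cite{Heinzner-Kutzschebauch} together with \cite[Proposition 7]{KLSb}, and then globalizes not by gluing local lifts over a cover of $Z$ but by lifting the pushdown of the Euler vector field to a global field $A\in\A(X)^G$ (possible since $\quot XG$ is Stein) and invoking the flow argument of Theorem 1 and Remark 4 of \cite{KLSb}.
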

This  covers the remaining cases of Theorem \ref{thm:main}.
Without loss of generality we assume that there is one strictly positive weight and let $n$ be the number of strictly negative weights. It is easy to see that the ILP and DP hold in the case that $n=1$, so we assume that $n\geq 2$. If $g\in G$ induces a nontrivial automorphism of  $G^0$ then it would have to switch the positive and negative weight spaces of $V$, which is not possible. Hence $G$   centralizes $G^0$. 
 
 We need   two local versions of the LP, as follows.

\begin{proposition}\label{prop:local.lift}
Let $\theta\in \Aut(U) $ where $U$ is a connected neighborhood of $V^G$ in $Z$ and $\theta$ is the identity on $V^G$. Then, modulo $\Aut(Z)$, $\theta$ has a $G$-equivariant lift $\Theta$ to $\pi\inv(U )$.
\end{proposition}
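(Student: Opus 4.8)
The plan is to exploit the single positive weight to reduce the lifting problem to an explicit construction on the open locus of $Z$ where both the positive and negative parts are non-zero, and then to push the resulting lift across the remaining locus by a removable-singularity argument. The reason the general ILP/DP machinery is unavailable here is that, with only one positive weight, the nullcone $\NN(V)=V_+\cup V_-$ can have codimension $1$ in $V$ (e.g. when the weight-zero part vanishes), so Proposition \ref{prop:ILP.tori} does not apply and a hands-on construction is forced.

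First I would reduce the set-up. By the Luna slice theorem at a point of $V^G$, the germ of $Z$ along $\pi(V^G)$ is modelled on the germ of $\quot VG$ at $*=\pi(0)$, so it suffices to treat $U$ a neighborhood of $*$ with $\theta(*)=*$; and since $\quot VG\simeq V^G\times\quot{V_0}G$ with $\theta$ fixing the $V^G$-factor, I may assume $V^G=(0)$. As $G$ centralizes $G^0=\C^*$, every weight space is $G$-stable, and using standard covering-space arguments for the finite group $G/G^0$ (as in Corollary \ref{cor:DP.G0} and Theorem \ref{thm:lift.finite}) I would concentrate on $G=\C^*$. Write $V=V_+\oplus V_-\oplus V_0$ for the positive, negative and weight-zero parts. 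The genuinely hard sub-case is $\dim V_+=1$: if $\dim V_+\geq 2$ then both walls $\{V_+=0\}$ and $\{V_-=0\}$ already have codimension $\geq 2$ in $V$ and the extension step below is immediate by Hartogs.

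The key geometric fact is that every closed orbit meeting $\{V_+=0\}$ or $\{V_-=0\}$ lies over $\pi(V_0)$, so the open set $Z^\circ\subset Z$ of closed orbits with both $V_+\neq0$ and $V_-\neq0$ has complement of codimension $\dim V_++\dim V_--1\geq n\geq 2$. Over $Z^\circ$ the action is free and $\pi\inv(Z^\circ)=\{V_+\neq0,\ V_-\neq0\}$ is $G$-saturated, so, using the single positive coordinate $x$ (of weight $a$) to rigidify the fibre, I would construct a $G$-equivariant lift $\Theta^\circ$ of $\theta$ over $\pi\inv(U)\cap Z^\circ$ explicitly. In the model $V=\C^3$ with $x$ of weight $1$ and $y_1,y_2$ of weight $-1$, where $\quot VG=\C^2$ via $(xy_1,xy_2)$, the lift is simply $\Theta(x,y)=(x,\ \theta_1(xy)/x,\ \theta_2(xy)/x)$. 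Any two such lifts differ by multiplication by a nowhere-zero invariant function, which, together with the freedom modulo $\Aut(Z)$, provides the room to normalize the construction.

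The main step, and the main obstacle, is to show that $\Theta^\circ$ extends holomorphically and biholomorphically across the walls. Across $\{V_-=0\}$, of codimension $n\geq 2$, this is Hartogs. Across the codimension-one wall $\{V_+=0\}$ I cannot use Hartogs and must instead apply the Riemann removable-singularity theorem, which requires $\Theta^\circ$ to be locally bounded there. Boundedness is precisely the cancellation of the negative power of $x$ produced by the rigidification against the vanishing of $\theta$ along $\pi(\{V_+=0\})$: in the model above $\theta(*)=*$ forces $\theta_j(u)=O(\lvert u\rvert)$, so $\theta_j(xy)/x$ stays bounded as $x\to0$ and the extension restricts on $\{x=0\}$ to the linear isomorphism $D_*\theta$ on the negative coordinates. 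For higher weight $a$ and nontrivial $V_0$ the required vanishing order of $\theta$ along the image of the wall is dictated by the orders to which the generators of $\O(Z)$ vanish there, and showing that $\theta$, being strata-preserving, degenerates compatibly, after a suitable correction by an element of $\Aut(Z)$ normalizing its leading behavior at $*$, is where the real work lies. Once local boundedness is secured, Riemann's theorem yields a holomorphic $\Theta$ on $\pi\inv(U)$; running the same argument for $\theta\inv$ shows $\Theta$ is biholomorphic, and both the $G$-equivariance and the identity $\pi\circ\Theta=\theta\circ\pi$ persist by continuity from $Z^\circ$.
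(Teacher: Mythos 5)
Your overall skeleton---work over the locus where the positive coordinate $x$ is nonzero, write the lift there explicitly, then extend across the wall $\{x=0\}$---does run parallel to the geometry of the paper's argument, which identifies $Z=\quot VG$ with the quotient of $V'=V^G\oplus W$ by the \emph{finite} group $H=G_{x_0}$, $x_0=1\in\C$ (Lemma \ref{lem:eval.x0}), lifts $\theta$ over that finite quotient (Proposition \ref{prop:lift.finite}), and transports the result back to a lift on $\pi\inv(U)$ that is a priori only \emph{rational}, with possible poles along $\{x=0\}$. The gap is exactly at your final step. You assert that after correcting $\theta$ by an element of $\Aut(Z)$ ``normalizing its leading behavior at $*$,'' the rigidified lift becomes locally bounded along the wall, so that Riemann's removable singularity theorem applies---and you flag this as ``where the real work lies.'' But this is not a technical verification to be deferred; it is the entire content of the proposition, and as you have set it up it fails: no correction concentrated at $*$ makes the lift bounded in general. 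The components of any equivariant lift over $\{x\neq 0\}$ are built from the module $M$ of Laurent covariants $x^rm(z)$, which genuinely contains negative powers of $x$; in the paper's example (coordinate weights $(-3,1,8,12)$) the weight-$12$ generators include $x^{-4}$, $x^{-3}y^3$ and $x^{-1}yz$, and such terms enter the lift with coefficients that are arbitrary holomorphic functions of the weight-zero variables, not constants controlled by a jet of $\theta$ at the origin. Your $d=1$, $V_0=0$ model is misleading precisely because there the boundedness is automatic and no correction is needed.

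What actually removes the poles in the paper is a mechanism your proposal has no counterpart for: split the covariants in $M$ into \emph{good} and \emph{bad} ones, prove $M$ is closed under composition (Lemma \ref{lem:M}) and that only finitely many homogeneous covariants are bad (Corollary \ref{cor:good.vars}), and then kill the bad part of the lift iteratively, weight space by weight space, by composing with de Jonqui\`ere-type maps $\Lambda=(x,y,\dots,z_j-\sum_ic_{ij}m_i,\dots)$, $c_{ij}\in\O(W_0)$. Each $\Lambda$ is only meromorphic on $V$, yet induces an honest strata-preserving automorphism $\lambda\in\Aut(Z)$, and the composite of these $\lambda$'s is the element of $\Aut(Z)$ that the phrase ``modulo $\Aut(Z)$'' in the statement refers to. Two further points in your reduction would also need repair: for weight $d>1$ you cannot rigidify by dividing by $x$ (the action on $\pi\inv(Z^\circ)$ is only generically free, with residual cyclic isotropy, and equivariant components are forced to be Laurent covariants rather than quotients by a single power of $x$), which is exactly why the paper works with the slice $\{x=1\}$ and the finite group $H\supset\Z/d\Z$; and your reduction to $G=\C^*$ via Corollary \ref{cor:DP.G0} is unavailable in this hard case, since $\quot V{G^0}$ may have codimension-one strata, so the paper instead carries the finite part along inside $H$, using $\sigma$-equivariance and normalization by $d\Theta'(0)$ to restore genuine equivariance.
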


\begin{proposition}\label{prop:local.lift.G0}
 Let $\theta\in \Aut(U) $ where $U$ is a connected neighborhood of $V^{G^0}/G$ in $Z$ and $\theta$ is the identity on $V^{G^0}/G$. Then, modulo $\Aut(Z)$, $\theta$ has a $G$-equivariant lift $\Theta$ to $\pi\inv(U)$.
\end{proposition}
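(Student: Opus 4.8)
The plan is to use the single positive weight to present $Z$ globally as a finite quotient of a vector space, to lift $\theta$ through that finite quotient by Proposition~\ref{prop:lift.finite}, and then to spread the lift out along the $G^0$-direction; the delicate point is extending the lift across the divisor cut out by the single positive coordinate.

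Write $V=V^{G^0}\oplus V_+\oplus V_-$, where $V_+$ is the $1$-dimensional positive weight space, of weight $p$ and with coordinate $x$, and $V_-$ carries the negative weights $-q_1',\dots,-q_n'$. Since there is only one positive weight, a $G^0$-invariant monomial not built from $V^{G^0}$ alone must involve $x$, and balancing weights identifies $\OO(V)^{G^0}$ with $\OO(V^{G^0})\otimes\OO(V_-)^{\mu_p}$, where $\mu_p\subset G^0$ acts on $V_-$ through the negative weights mod $p$; thus $\quot V{G^0}\cong V^{G^0}\times(V_-/\mu_p)$. Setting $W:=V^{G^0}\oplus V_-$, the induced action of $\Gamma=G/G^0$ (which centralizes $G^0$, hence preserves the $G^0$-weight spaces) combines with $\mu_p$ into a finite group $H$ acting linearly on $W$—an extension of $\Gamma$ by $\mu_p$—and one obtains a global identification $Z\cong W/H$ of $Z$ with the quotient of the $H$-module $W$. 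Under it $V^{G^0}/G$ is the image of $V^{G^0}\times\{0\}$, and the directions transverse to $V^{G^0}/G$ have become the finite quotient $V_-/\mu_p$.

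I would then lift in two steps. First, because $Z\cong W/H$ with $H$ finite and $W$ a vector space, Proposition~\ref{prop:lift.finite} produces a lift $\hat\theta$ of $\theta$ to the preimage of $U$ in $W$; by Remark~\ref{rem:sigma.equivar} it is $\sigma$-equivariant, and since $\theta$ is the identity on $V^{G^0}/G$ one may take $\sigma=\id$ and $\hat\theta$ to fix $V^{G^0}\times\{0\}$. Second, identifying $W$ with the slice $\{x=1\}\subset V$ and using $\{x\neq 0\}\cong\C^*\times_{\mu_p}W$, I set $\Theta(t\cdot w):=t\cdot\hat\theta(w)$ for $t\in\C^*$ and $w\in W$; this is well defined because $\hat\theta$ is $\mu_p$-equivariant and $G$-equivariant because $\hat\theta$ respects the $\Gamma$-part of $H$ (the character of $\Gamma$ on $V_+$ being harmless), and it yields a $G$-equivariant lift of $\theta$ over $\pi\inv(U)\cap\{x\neq 0\}$ with $\Theta_x=x$.

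The main obstacle, and the reason for the ``modulo $\Aut(Z)$'' clause, is to extend $\Theta$ holomorphically across $\{x=0\}$, the part of $\pi\inv(U)$ lying over $V^{G^0}/G$. As each component of $\Theta$ is a $G^0$-weight vector, those of nonnegative weight extend automatically, whereas a monomial $c_\beta(v_0)\,y^\beta$ occurring in $\hat\theta_{y_i}$ contributes to $\Theta_{y_i}$ the term $c_\beta(v_0)\,x^{(\sum_j\beta_jq_j'-q_i')/p}y^\beta$, holomorphic across $\{x=0\}$ precisely when $\sum_j\beta_jq_j'\ge q_i'$. Hence only the finitely many ``downward'' low order terms, those with $0<\sum_j\beta_jq_j'<q_i'$, can create poles; such a term satisfies $\sum_j\beta_jq_j'\equiv q_i'\pmod p$, so it mixes only coordinates of equal $\mu_p$-weight, and it already occurs, e.g., for weights $(1,-2,-1)$. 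Here I use that $U$ contains all of $V^{G^0}/G$, so each coefficient $c_\beta$ is holomorphic on the whole of $V^{G^0}$: extracting these terms by $G^0$-weight is $H$-equivariant, so they assemble into a global, strata preserving, unipotent automorphism of $V^{G^0}\times(V_-/\mu_p)$, that is, into an element of $\Aut(Z)$. Composing $\theta$ with its inverse removes all pole-producing terms, and for the modified $\theta$ the map $\Theta$ then extends across $\{x=0\}$ to the desired $G$-equivariant lift on all of $\pi\inv(U)$. Verifying that these downward terms really assemble into a strata preserving automorphism of $Z$—the step at which the single positive weight, and hence the finiteness of the transverse quotient, is essential—is the crux; Proposition~\ref{prop:local.lift} remains available should one prefer to treat the behavior near $V^G$ by a separate argument.
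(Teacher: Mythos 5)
Your strategy is essentially the paper's: present $Z$ as the quotient of $W=V^{G^0}\oplus V_-$ by the finite group $H$ (the paper does this by restricting invariants to the slice $\{x=1\}$, Lemma \ref{lem:eval.x0}), lift $\theta$ through this finite quotient via Proposition \ref{prop:lift.finite} and Remark \ref{rem:sigma.equivar}, spread the lift out over $\{x\neq 0\}$ by equivariance along the $\C^*$-direction, and then try to remove the poles along $\{x=0\}$ by correcting $\theta$ with an element of $\Aut(Z)$. Up to that point your reductions agree with the paper; the only loose step is the assertion that one "may take $\sigma=\id$": this is not automatic, and the paper instead achieves genuine $H$-equivariance by composing with $d\Theta'(0)\inv$, which is itself a correction of $\theta$ by an element of $\Aut(Z)$ (allowed by the "modulo $\Aut(Z)$" clause).

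The genuine gap is the last step, which you yourself flag as "the crux": the claim that the downward (pole-producing) terms assemble into an automorphism $\nu\in\Aut(Z)$ and that composing with $\nu\inv$ "removes all pole-producing terms." This cancellation is not formal, because composing maps substitutes power series into power series, and a term of the lift that is holomorphic across $\{x=0\}$ can acquire a pole after such a substitution; in particular, poles are not confined to your "downward" terms once corrections begin. The paper's own example makes this concrete: with weights $(-3,1,8,12)$, the covariant $x^4z^3$ is polynomial, but under the substitution $z\mapsto x^{-2}y^2$ (a legitimate weight-$8$ entry of a lift) it becomes $x^{-2}y^6$. So removing the visible downward terms of $\hat\theta$ in one pass need not produce a holomorphic lift. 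This is exactly why the paper introduces the module $M$ of rational covariants closed under composition (Lemma \ref{lem:M}), the good/bad dichotomy together with the finiteness statement (Corollary \ref{cor:good.vars}), and then kills bad terms block by block ($W_1$, then $W_2$, etc.) using de Jonqui\`ere-type automorphisms whose corrections in the $W_j$-variables involve only variables outside that block — the triangularity guaranteeing that each correction is an automorphism inducing an element of $\Aut(Z)$, and the good/bad calculus guaranteeing that the process terminates in a lift holomorphic across $\{x=0\}$ (Lemma \ref{lem:lifting.Theta'}). Without this machinery, your final sentence restates the proposition rather than proving it.
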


\begin{proof}[Proof of Theorem \ref{thm:last.case}]
 Now $\phi$ induces a biholomorphic map of $X^G$ and $V^G$.  Let $V_0$ be a $G$-complement to $V^G$ in $V$. By \cite{Heinzner-Kutzschebauch} the normal bundle to $X^G$ in $X$ is equivariantly trivial, hence $G$-isomorphic to the normal bundle $V_0\times V^G\to V^G$ of $V^G$ in $V$. By \cite[Proposition 7]{KLSb}, there is a $G$-biholomorphism $\Psi\colon \Omega'\to \Omega$ where $\Omega'$ is a $G$-saturated neighborhood of $X^G$ in $X$ and $\Omega$ is a $G$-saturated neighborhood of $V^G$ in $V$. Moreover, $\Psi$ agrees with $\phi$ on $X^G$. We may assume that $U'=\quot {\Omega'}G$ and $U=\quot \Omega G$ are connected. Let $\theta=\phi\circ\psi\inv$ where $\psi\colon U'\to U$ is induced by $\Psi$. By Proposition \ref{prop:local.lift}  there is a $\tau\in\Aut(Z)$ such that $\tau\circ\theta$ lifts to $\Theta\in\Aut(\Omega)^G$. Then $\Phi_1:=\Theta\circ\Psi$ is an equivariant lift of $\tau\circ\phi$. Replace $\phi$ by $\tau\circ\phi$.

By Theorem \ref{thm:lift.finite} and Remark \ref{rem:sigma.equivar}, the restriction of $\phi$ to $X^{G^0}/G$ has a $\sigma$-equivariant lift $\phi_0\colon X^{G^0}\to V^{G^0}$ where $\sigma$ is an automorphism of $G/G^0$.  Now the restriction of $\Phi_1$ to $\Omega'\cap X^{G^0}$ is an equivariant lift of $\phi$. Thus $\phi_0$ and $\Phi_1$  locally differ on $\Omega'\cap X^{G^0}$ by multiplication by some $g\in G/G^0$. Then they differ on $X^{G^0}$ by a fixed $g\in G$. We can make $\phi_0$ equivariant by replacing it by $g\circ\phi_0$. Now by \cite{Heinzner-Kutzschebauch} and \cite[Proposition 7]{KLSb} again, we can find a   $G$-equivariant biholomorphism $\Psi \colon \Omega'\to \Omega$ which agrees with $\phi_0$ on $X^{G^0}$ where $\Omega'$ is a  $G$-saturated neighborhood of  $X^{G^0}$ in $X$ and  $\Omega$ is a  $G$-saturated neighborhood of $V^{G^0}$ in $V$. We may assume that $U_0'=\quot {\Omega'}G$ and $U=\quot{\Omega}G$ are connected. Let $\psi:U_0'\to U$ be induced by $\Psi$ and let $\theta=\phi\circ\psi\inv$. By Proposition \ref{prop:local.lift.G0}  there is a $\tau\in\Aut(Z)$  such that $\tau\circ\theta$ has a $G$-equivariant lift $\Theta$  on $\Omega'$.   Replace $\phi$ by $\tau\circ\phi$. Then $\Phi:=\Theta\circ\Psi$ is an equivariant lift of $\phi$ over  $U'_0$.

 Let $E$ be the Euler vector field on $V$ and let $\pi_*(E)$ denote its pushdown to $\A(Z)$. Let $B\in\A(\quot XG)$ denote the image of $\pi_*(E)$ using $\phi\inv$.  Using $\Phi$ one gets a lift $A_0\in \A(\pi\inv(U_0'))^G$ of $B$ restricted to $U_0'$. Since the isotropy groups of closed orbits outside of $X^{G^0}$ are finite, $B$ has local lifts $A_i$   over an open cover   $\{U_i'\}$ of $\quot XG$. Since $\quot XG$ is Stein, we see that there is no obstruction to finding a global lift $A\in\A(X)^G$ of $B$. 
Now one can use the argument for the proof of  Theorem 1 of \cite{KLSb} (see Remark 4 of loc.\ cit) to prove Theorem \ref{thm:last.case}.  \end{proof}

It remains to prove the two propositions. The rest of this section (and paper) is the proof of Proposition \ref{prop:local.lift.G0}. The proof of Proposition \ref{prop:local.lift} is similar. We may assume that the single strictly positive weight of $V$ is $d>0$ and that the other weights are negative.
We  write 
$$
V=\C\oplus V^G\oplus  W,\quad W=W_0\oplus W',\quad W'=\bigoplus_{i=1}^k W_i
$$ 
where $\C$ is the weight space of the weight $d$ and the weights in $W_i$  are $-e_i$,   $0=e_0<e_1<\cdots<e_k$. Let $x$ be a coordinate function on $\C$, let $x_1,\dots,x_p$ be coordinate functions on $V^G$, let $y_1,\dots,y_q$ be coordinate functions on $W_0$ and let $z_1,\dots,z_n$ be coordinate functions on $W'$ where each $z_i$ is a coordinate function on some $W_j$. Then $x$ has weight $-d$, the $x_i$ and $y_j$ have weight 0 and   the $z_i$ have weights in $\{ e_1,\dots,e_k\}$. Note that $G$ stabilizes $\C$, $V^G$ and the $W_i$, $i=0,\dots,k$.

Now let $H$ denote the isotropy group of $x_0:=1\in \C$. Then $H$ contains the isotropy group  $\zd$ of   $G^0$  at $x_0$. It is easy to see that $H/(\zd)\simeq G/G^0$. Since $G$ centralizes $G^0$, $H$ centralizes $\zd$.   Let $V'$ denote $V^G\oplus W$.

\begin{lemma}\label{lem:eval.x0}
Restriction to  $\{x_0\}\times V'$ gives an isomorphism  $\lambda\colon \C[V]^G\to\C[V']^H$.
\end{lemma}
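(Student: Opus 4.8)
The plan is to factor the statement through the connected subgroup $G^0=\C^*$ and its residual finite stabilizer $\zd$, proving first the analogous isomorphism $\lambda\colon\C[V]^{G^0}\xrightarrow{\sim}\C[V']^{\zd}$ and then upgrading it to $G$ and $H$ by an equivariance argument. The decisive structural fact is that the line $\C$ carries the \emph{only} positive $G^0$-weight: the coordinate $x$ has weight $-d$, while every coordinate on $V'=V^G\oplus W$ has weight $\geq 0$ (weight $0$ on $V^G\oplus W_0$ and weight $e_j>0$ on $W_j$). Geometrically this says that $\{x\neq0\}$ is $G$-saturated and that $\{x_0\}\times V'$ is a $\zd$-cross-section for the $G^0$-action, and it is what drives the whole computation.

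For the $G^0$-level statement I would write any $f\in\C[V]^{G^0}$ as $f=\sum_{a\geq0}x^a f_a$ with $f_a\in\C[V']$, using $\C[V]=\C[V'][x]$. Invariance forces each summand $x^a f_a$ to have weight $0$, so $f_a$ is a $G^0$-weight vector of weight $da$; hence distinct powers of $x$ contribute $f_a$ lying in distinct $G^0$-weight spaces of $\C[V']$. Thus $\lambda(f)=f|_{x=1}=\sum_a f_a=0$ forces every $f_a=0$, giving injectivity. For surjectivity, given $h\in\C[V']^{\zd}$ I decompose it into $G^0$-weight components $h=\sum_m h_m$; since $\zd$ acts on the weight-$m$ space by $t\mapsto t^m$, invariance under $\zd$ means exactly that $h_m=0$ unless $d\mid m$. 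Then $f:=\sum_{d\mid m}x^{m/d}h_m$ is a genuine $G^0$-invariant polynomial on $V$ (each exponent $m/d$ is a nonnegative integer and each summand has weight $-d(m/d)+m=0$) with $\lambda(f)=\sum_m h_m=h$. This establishes $\lambda\colon\C[V]^{G^0}\xrightarrow{\sim}\C[V']^{\zd}$.

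To pass to $G$, I would note that $G$ preserves the $G^0$-weight decomposition because $G$ centralizes $G^0$; in particular $G$ acts on $\C$ by a character $\chi$ with $\chi|_{G^0}\colon t\mapsto t^d$. Hence $H=\ker\chi$ satisfies $H\cap G^0=\zd$, and since $\chi(G^0)=\C^*$ we get $G=G^0H$, so $H/\zd\simeq G/G^0$. As $H$ fixes $x_0$ and stabilizes $V'$, the direct computation $(h\cdot f)|_{x=1}(v')=f(1,h\inv v')=\bigl(h\cdot(f|_{x=1})\bigr)(v')$ shows $\lambda(h\cdot f)=h\cdot\lambda(f)$ for all $h\in H$. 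Because $G^0$ acts trivially on $G^0$-invariants, the residual $G/G^0$-action on $\C[V]^{G^0}$ is realized through representatives in $H$, and likewise $\zd$ acts trivially on $\C[V']^{\zd}$; thus $\lambda$ intertwines the $G/G^0$-action with the $H/\zd$-action. Taking invariants then gives
$$\C[V]^G=(\C[V]^{G^0})^{G/G^0}\xrightarrow{\ \sim\ }(\C[V']^{\zd})^{H/\zd}=\C[V']^H,$$
which is the assertion.

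I expect the only real obstacle to be bookkeeping rather than any deep difficulty: one must track the sign conventions carefully so that $x$ genuinely carries all the negative function-weight while $V'$ has no negative weights (this is exactly the single-positive-weight hypothesis), and one must verify cleanly that $G=G^0H$ with the two residual actions matching, so that taking $G/G^0$-invariants commutes with $\lambda$. Once these are in place, both injectivity and surjectivity are forced by the weight-space decomposition.
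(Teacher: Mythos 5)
Your proof is correct and is essentially the paper's argument: the crux in both is that, because the only positive weight sits on the line $\C$, every $\zd$-invariant piece of $\C[V']$ (you use weight components, the paper uses monomials) can be compensated to a $G^0$-invariant by a \emph{nonnegative} power of $x$, which is exactly where the single-positive-weight hypothesis enters. The only organizational difference is that you factor through the isomorphism $\C[V]^{G^0}\simeq\C[V']^{\zd}$ and then take invariants under $H/\zd\simeq G/G^0$, whereas the paper gets $G$-invariance in one step by averaging each monomial over the finite group $H$ and multiplying by the unique compensating power $x^r$; both versions rest on the same structural fact $G=G^0H$.
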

\begin{proof}
We may assume that $V^G=(0)$. Clearly $\lambda$  is injective.   Let $f\in\C[W]^H$ and let $m$ be a monomial occurring in $f$. Then $m$ has degree $a_0,\dots,a_k$ in the variables of $W_0,\dots,W_k$. Thus $H\cdot m$ is a sum of monomials of the same degrees. Now $\sum a_i e_i\equiv 0\mod d$ so there is a unique $r$ such that $-rd+\sum a_ie_i=0$. Then $x^r(H\cdot m)\in\C[V]^G$ is $\lambda\inv(H\cdot m)$. Hence $\lambda$ is an isomorphism.
\end{proof}

As a consequence of the lemma, $Z=\quot VG\simeq Z':=V'/H$.  If $S=Z_{(L)}$ is a stratum of $Z$, then $S$ is the image of $V^{<L>}:=V^L\cap\pi\inv(S)$. We use similar notation for strata $Z'_{(L')}$ of $Z'$ and subsets $V'^{<L'>}$ of $V'$.

Let $p\colon V'\to Z'$ be the quotient mapping. Let $\theta\in\Aut(U)$ where $U$ is a connected neighborhood of $V^{G^0}/G$ in $Z$. Let $U'$ denote the subset of $Z'$ corresponding to $U$ and let
$\theta'$ denote the element of $\Aut(U')$ corresponding to $\theta$.  By Proposition \ref{prop:lift.finite} and Remark \ref{rem:sigma.equivar} there is a lift $\Theta'\in\Aut(p\inv(U'))$ which   is $\sigma$-equivariant for some automorphism $\sigma$ of $H$. Let $S=Z_{(L)}$ be a stratum of $Z$. If $L$ is finite, then we may assume that $L\subset H$. Now
$$
S=\pi(\C\setminus\{0\}\times V^G\times W^{<L>})=\pi(\{1\}\times V^G\times W^{<L>})=p(V^G\times W^{<L>})
$$
is a stratum of $Z'$. Moreover, $p\inv(S)$ is $\C^*$-stable (scalar action) and $\Theta'$ preserves $p\inv(S\cap U')$.  If $L$ is infinite, then $S$ is the image of
$$
V^{<L>}=V^G\times W_0^{<L>}=V^G\times W_0^{<L_0>}
$$
where $L_0$ is a subgroup of $H$ containing $\zd$ such that $L/\C^*\simeq L_0/(\zd)$. Thus $S$ is a subset of a stratum $S'$ of $Z'$. Now 
$p\inv(S)$ is $\C^*$-stable (scalar action) and $p\inv(S\cap U')$ is $\Theta'$-stable since $\Theta'$ induces $\theta$. Then $d\Theta'(0)$  is  $\sigma$-equivariant and induces   elements $\theta_0\in\Aut(Z)$ and $\theta_0'\in\Aut(Z')$. Replace $\theta$ by $\theta_0\inv\circ\theta$, replace $\theta'$ by $(\theta_0')\inv\circ\theta'$ and replace $\Theta'$ by
$d\Theta'(0)\inv\circ \Theta'$. Then $\Theta'\in\Aut(p\inv(U'))^H$ is a lift of $\theta'$.

Now $\Theta'=(x_1',\dots, x_p',y_1',\dots,y_q',z_1',\dots,z_n')$ where $x_1',\dots, x_p'$ are $H$-invariant, $(y_1',\dots,y_q')\colon V'\to W_0$ is $H$-equivariant where the $y_i'$ are $\zd$-invariant and 
 $z_i'$   has weight $e_j\mod d$ relative to $\zd$ where $z_i$ is a coordinate for $W_j$. 
The $x_i'$ are   the restrictions of  $a_1,\dots,a_p\in\O(\pi\inv(U))^G$.  The $y_i'$  lift to elements $b_1,\dots,b_q\in\O(\pi\inv(U))^{G^0}$ such that $(b_1,\dots,b_q)\colon \pi\inv(U)\to W_0$ is $G$-equivariant.  The main problem is that the $z_i'$ have lifts which may involve negative powers of $x$. It is clear that everything has holomorphic parameters coming from $V^G$, so we may reduce to the case that $V^G=(0)$. So $U'$ is now a neighborhood of $W_0/H$ in $Z'$   and $U$ is the corresponding open subset of $Z$. The morphism $\Theta'\colon p\inv(U')\to p\inv(U')$ is $H$-equivariant and preserves $W_0\times\{0\}$. Let $\Theta$ denote the (rational) lift of $\Theta'$ to $\pi\inv(U)$. We denote the entries of $\Theta$ by $(x,b_1,\dots,b_q,z_1'',\dots,z_n'')$.
 
 \begin{lemma}\label{lem:lifting.Theta'}
Let $\Theta$ be as above. Then there is a $\lambda\in\Aut(Z)$ and a lift $\Lambda$ of $\lambda$   such that $\Lambda\circ\Theta\in\Aut(\pi\inv(U))^G$.
 \end{lemma}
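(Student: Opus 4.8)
The plan is to clear the poles of $\Theta$ along $\{x=0\}$ one order at a time by composing on the left with rational lifts of suitable triangular automorphisms of $Z$. First I would expand each bad entry in powers of $x$, writing $z_i''=\sum_{r} x^{r}c_{i,r}$, where $c_{i,r}$ is a polynomial in the $W$-coordinates of $G^0$-weight $e_{j(i)}+rd$; since these weights must be nonnegative, the pole part runs only over $-\lfloor e_{j(i)}/d\rfloor\le r\le -1$, so the poles have bounded order and lie in the hyperplane $\{x=0\}$. The order bound is not accidental: because $\Theta$ descends to $\theta\in\Aut(Z)$, each invariant $x^{a}z_i(\cdots)$ is carried to an invariant, and comparing $x$-powers shows that $x^{\lfloor e_{j(i)}/d\rfloor}z_i''$ is holomorphic and that its restriction to $\{x=0\}$ --- the top pole coefficient --- is the restriction of a $G^0$-semiinvariant, hence a function supported over $V^{G^0}/G$.

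The correctors I would use are \emph{triangular} automorphisms of $Z$. Given a generator of $\C[V]^G$ of the form $u=x^{a}z_i(\cdots)$ with $a\ge 1$, the assignment $u\mapsto u-c$ with $c$ a lower invariant defines an automorphism $\lambda\in\Aut(Z)$, and on $V$ it is realized by the rational $G$-map $\Lambda$ that sends $z_i\mapsto z_i-x^{-m}Q$ (all other coordinates fixed), where $m=\lfloor e_{j(i)}/d\rfloor$ and $Q$ is a chosen polynomial of $G^0$-weight $e_{j(i)}\bmod d$. Using Lemma~\ref{lem:eval.x0} one checks that such a $\Lambda$ descends to a genuine, invertible $\lambda\in\Aut(Z)$ (it is unipotent with respect to the ordering $e_1<\dots<e_k$). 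Choosing $Q$ to be the top pole coefficient $c_{i,-m}$ of the entry we are trying to repair, the composite $\Lambda\circ\Theta$ has $i$-th entry $z_i''-x^{-m}c_{i,-m}(b,z'')$, in which the leading pole $x^{-m}c_{i,-m}$ cancels.

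The reason the remaining error has strictly lower pole order is the normalization carried out just before the lemma. After replacing $\Theta'$ by $d\Theta'(0)\inv\circ\Theta'$ we have $\Theta'=\id+(\text{higher order})$; moreover $\Theta'$ preserves $W_0$ and, since $\theta$ is the identity on $V^{G^0}/G=W_0/H$, Remark~\ref{rem:sigma.equivar} forces $\Theta'|_{W_0}=\id$. Hence every higher-order term of $\Theta'$ involves at least one $W'$-coordinate, and when such a term is lifted $G$-equivariantly to $V$ it acquires a strictly positive power of $x$. Consequently the discrepancy $c_{i,-m}(b,z'')-c_{i,-m}(y,z)$ produced by the corrector carries an extra factor of $x$, so after cancellation the pole order of the $i$-th entry drops. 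Running this over all entries and inducting on the (finite) maximal pole order, I would reduce $\Theta$ to a holomorphic $G$-map; being holomorphic, $G$-equivariant, and descending to the automorphism $\lambda\circ\theta$ of $Z$ (and agreeing off $\{x=0\}$ with the biholomorphism $\Theta|_{\{x\ne0\}}$), it is, by normality of $\pi\inv(U)$, the desired element of $\Aut(\pi\inv(U))^G$, while the accumulated correctors compose to $\lambda$.

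I expect the main obstacle to be organizing the induction so that it actually terminates. Two points need care: a corrector aimed at one entry must not create higher-order poles in the others (this is controlled by keeping the correctors triangular in the $e_j$ and degree one in the $z$-variables, so that no product of poled entries occurs), and when $d\nmid e_{j(i)}$ the top pole coefficient is only a weight-$(e_{j(i)}\bmod d)$ semiinvariant and may involve low-weight $z$-variables, so a single corrector lowers the pole order only after a secondary induction on degree. I would therefore set the whole argument up as a degree-filtered induction --- repairing $\Theta$ modulo terms of degree $>N$, then letting $N\to\infty$ and using holomorphy of $\Theta'$ to pass to the limit --- and this convergence/termination step is where the real work lies.
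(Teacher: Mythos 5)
Your correction mechanism is the same as the paper's --- compose $\Theta$ on the left with rational, triangular (de Jonqui\`ere type) lifts $z_i\mapsto z_i-(\text{correction})$ of strata preserving automorphisms of $Z$ --- but your termination argument contains a genuine gap, located exactly at the step you yourself flag as ``where the real work lies.'' The pivotal claim, that every higher-order term of $\Theta'$ acquires a \emph{strictly positive} power of $x$ when lifted $G$-equivariantly, so that the discrepancy $c_{i,-m}(b,z'')-c_{i,-m}(y,z)$ is $O(x)$ and the pole order strictly drops, is false. The paper's own example refutes it: for weights $(-3,1,8,12)$ on $(x,y,z,w)$, the degree-two covariant $y^2$ (of weight $2\equiv 8\bmod 3$), a perfectly admissible higher-order term of the weight-$8$ entry of $\Theta'$, lifts to $x^{-2}y^2$. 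In general, weight considerations give only that the discrepancy is holomorphic (its $G^0$-weight $e_{j(i)}\bmod d$ lies in $[0,d)$, which bounds its pole order by $0$), not that it vanishes on $\{x=0\}$: the coefficient $c_{i,-m}$ involves variables of weight $<d$, and the entries of $\Theta$ in those variables, while holomorphic, can have $x$-free higher-order terms (e.g.\ $z_2''=z_2+\epsilon z_1^2$ when $e_2=2e_1$), which contribute a nonzero $x^0$-part to the discrepancy. Then one corrector step reproduces a pole of the \emph{same} order $m$ with a coefficient of higher degree, your induction on pole order does not close, and you are driven into the infinite degree-filtered iteration whose convergence you explicitly leave open. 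Since that convergence is the entire difficulty, the proposal is missing the key idea rather than a routine verification.

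The paper closes this gap with a piece of machinery your proposal lacks: what must be removed from each entry is not its polar part but its \emph{bad} part, where an element of the module $M$ of rational covariants is bad if it fails to remain polynomial under \emph{all} substitutions by elements of $M$ (Lemma \ref{lem:M} shows $M$ is closed under such substitutions, so this is well posed). Badness is strictly stronger than having a pole: a currently polynomial term can be bad, and such terms are precisely the ones that regenerate poles when a corrector is applied --- the phenomenon defeating your induction. The decisive finiteness input is Corollary \ref{cor:good.vars}: there are only finitely many bad monomials, so the bad part of each entry is a finite sum $\sum_i c_{ij}m_i$ with $c_{ij}\in\O(W_0)$, and the paper subtracts this \emph{entire} sum (not just the top pole coefficient) by de Jonqui\`ere automorphisms, handling the weights $e_1,\dots,e_k$ in increasing order; triangularity holds because bad monomials of weight $e_j$ cannot involve the $W_j$-variables themselves. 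The finiteness of the bad monomials is what makes the whole process terminate after finitely many compositions, with no limiting procedure. To complete your argument you would essentially have to introduce this good/bad dichotomy and prove the finiteness statement --- that is, to reconstruct Lemma \ref{lem:M} and Corollary \ref{cor:good.vars}.
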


The lemma implies Proposition \ref{prop:local.lift.G0} and completes our proof of Theorem \ref{thm:last.case}, hence of Theorem \ref{thm:main}. The rest of the paper   is the proof of Lemma \ref{lem:lifting.Theta'}.
 
   Let $m$ be a monomial in $z_1,\dots,z_n$ which is a generator of the covariants of $\zd$ of weight $e_1\mod d$ as a module over the $\zd$-invariants. Then $m$ times a unique power of $x$ has weight $e_1$. The power of $x$ can be negative (see example below).
  Let $\overline{ M}$ be the $\C[W']^{\zd}$-module generated by the covariants of weights $e_1,\dots,e_k\mod d$ and let $M$ be the $(\C[x,x\inv]\otimes\C[ W'])^G$-module generated by the  corresponding (rational) covariants of weights $e_1,\dots,e_k$.
  
\begin{lemma}\label{lem:M}
Let $m\in M$ have weight $e_j$ and let $m_1,\dots,m_k$ be elements of $M$ of weights $e_1,\dots,e_k$. Then $m'=m(m_1,\dots,m_k)\in M$.
\end{lemma}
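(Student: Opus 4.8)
The plan is to reduce the assertion to an elementary closure property on the polynomial side, using the isomorphism $\lambda$ of Lemma~\ref{lem:eval.x0} to eliminate the (possibly negative) powers of $x$ from the bookkeeping. First I would make the substitution precise: writing an arbitrary element of $M$ as a finite sum of rational covariant monomials $x^s z_1^{a_1}\cdots z_n^{a_n}$, the operation $m\mapsto m(m_1,\dots,m_k)$ replaces each coordinate $z_i$, which has weight $e_{w(i)}$ (here $w(i)$ denotes the index with $z_i$ a coordinate on $W_{w(i)}$), by the element $m_{w(i)}\in M$ of the same weight, while $x$ is left fixed. Since the substitution is additive, it suffices to treat a single monomial $\mu=x^s z_1^{a_1}\cdots z_n^{a_n}$ of weight $e_j$, for which $\mu(m_1,\dots,m_k)=x^s\prod_i m_{w(i)}^{a_i}$.

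Two things then have to be tracked, and both are routine. For the $G^0$-weight: each $m_{w(i)}$ has weight $e_{w(i)}$, so $x^s\prod_i m_{w(i)}^{a_i}$ has weight $-sd+\sum_i a_i e_{w(i)}$, which is exactly the weight $e_j$ of $\mu$; hence $m(m_1,\dots,m_k)$ is homogeneous of weight $e_j\in\{e_1,\dots,e_k\}$. For the ambient ring: each $m_{w(i)}$ lies in $\C[x,x\inv]\otimes\C[W']$, and a product of such elements again lies there, so no $z$-denominators (only Laurent powers of $x$) can appear. Thus $m(m_1,\dots,m_k)$ is a genuine rational $G$-covariant of weight $e_j$ inside $\C[x,x\inv]\otimes\C[W']$.

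It remains to see that \emph{every} such covariant lies in $M$, and this is where I would invoke Lemma~\ref{lem:eval.x0}. Exactly as in its proof, restriction to $\{x_0\}\times V'$ (setting $x=1$) carries $(\C[x,x\inv]\otimes\C[W'])^G$ and $M$ isomorphically onto $\C[W']^{\zd}$ (resp.\ the corresponding invariants) and onto $\overline M$, because the power of $x$ attached to a covariant of a given weight is uniquely determined. Under this identification the rational substitution becomes ordinary substitution of covariant polynomials: setting $x=1$ kills the factor $x^s$, so $m(m_1,\dots,m_k)|_{x=1}=\bar m(\bar m_1,\dots,\bar m_k)$ with $\bar m=m|_{x=1}$. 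Now $\bar m(\bar m_1,\dots,\bar m_k)$ is a polynomial in the $z$-variables of weight $\equiv e_j\bmod d$, hence a $\zd$-covariant of weight $e_j\bmod d$, hence an element of $\overline M$; transporting back by $\lambda\inv$ gives $m(m_1,\dots,m_k)\in M$.

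The main obstacle is not conceptual but is precisely the bookkeeping forced by the negative powers of $x$: one must check that substituting the rational covariants $m_i$ introduces no $z$-denominators and that the output weight lands back in the admissible list $\{e_1,\dots,e_k\}$ rather than some sum $e_i+e_{j}$ outside it, and one must confirm that the correspondence $M\leftrightarrow\overline M$ of Lemma~\ref{lem:eval.x0} really is a module isomorphism intertwining substitution (in particular that the change of coefficient ring from $G$-invariants to $\zd$-invariants is harmless at the level of the graded pieces of weight $e_i$). Passing to $\overline M$ via $\lambda$ is exactly what makes this transparent, since there the claim collapses to the evident statement that substituting covariant polynomials of weights $e_{w(i)}\bmod d$ into a covariant polynomial of weight $e_j\bmod d$ yields a covariant polynomial of weight $e_j\bmod d$. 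I would therefore organize the write-up around the identification $M\leftrightarrow\overline M$ and reduce the weight computation to a single line.
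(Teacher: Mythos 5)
Your proposal is correct and takes essentially the same route as the paper's own proof: restrict to $x=1$, use that this restriction is an isomorphism from the weight-$e_j$ covariants in $M$ onto the weight-$e_j \bmod d$ covariants in $\overline M$, observe that $\overline{m}(\overline{m}_1,\dots,\overline{m}_k)$ is again a $\zd$-covariant of weight $e_j \bmod d$ and hence lies in $\overline M$, and transport back. The only difference is that you make explicit the bookkeeping the paper leaves implicit (no $z$-denominators arise, the weight is preserved under substitution, and restriction to $x=1$ intertwines the two substitutions).
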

 
 \begin{proof}
Let $m\to\overline{ m}$ denote the mapping from $M$ to $\overline{M}$ given by restriction to $x=1$.    Then the mapping from the covariants of weight $e_j$ in $M$ to those of weight $e_j\mod d$ in $\overline{M}$ is an isomorphism. Since
$\overline{ m}'=\overline{m}(\overline {m}_1,\dots,\overline{m}_k)\in \overline M$, it follows that $m'\in M$.
 \end{proof}
 
 \begin{corollary}\label{cor:good.vars}
 Let $m\in M$. Then for any homogeneous element  $f\in\C[\C\oplus W']^G$ of sufficiently high degree, $fm$ is polynomial and remains polynomial after any substitutions $fm\mapsto fm(m_1,\dots,m_k)$.
 \end{corollary}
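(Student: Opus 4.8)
The plan is to translate both assertions into a bookkeeping of the power of $x$ attached to each monomial, using the restriction isomorphism $g\mapsto\overline g=g|_{x=1}$ from the proof of Lemma \ref{lem:M}, which matches the weight-$e_l$ part of $M$ with the weight-$e_l\bmod d$ part of $\overline M$ and intertwines the substitution operation. The starting observation is a uniform pole bound: if $g\in M$ has weight $e_l$, then every monomial of $g$ is $x^a z^b$ with $-ad+\operatorname{wt}(z^b)=e_l$, so $a=(\operatorname{wt}(z^b)-e_l)/d\ge -e_l/d\ge -e_k/d$. Hence $x^N g$ is polynomial for every $g\in M$, where $N=\lceil e_k/d\rceil$ does not depend on $g$. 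To see that $fm$ is polynomial I would use homogeneity of $f$: every monomial $x^a z^b$ of $f$ satisfies $a=\operatorname{wt}(z^b)/d$ (invariance) and $a+|b|=D$ (homogeneity of degree $D$), and since each variable has weight $\ge e_1$ we get $ad=\operatorname{wt}(z^b)\ge e_1|b|=e_1(D-a)$, i.e. $a\ge e_1 D/(d+e_1)$. Thus for $D$ large every monomial of $f$ is divisible by $x^N$, so $f$ itself is, and $fm$ is the product of the polynomials $x^{-N}f$ and $x^N m$.

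For the substitution I first note, by Lemma \ref{lem:M}, that $(fm)(m_1,\dots,m_k)$ again lies in $M$ and has weight $e_j$; by the pole bound above it is polynomial exactly when every monomial $z^c$ of its restriction $\overline{fm}(\overline m_1,\dots,\overline m_k)=(\overline f\cdot\overline m)(\overline m_1,\dots,\overline m_k)$ has weight $\ge e_j$, for then the attached power $(\operatorname{wt}(z^c)-e_j)/d$ of $x$ is nonnegative. The difficulty is that substitution can \emph{lower} weights, since a variable of high weight may be replaced by a covariant some of whose monomials have low weight, so I cannot simply propagate weight through the substitution. The remedy is to control the \emph{degree} instead. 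Using $\operatorname{wt}(z^b)\le e_k|b|$ in place of the previous estimate gives $a\le e_k D/(d+e_k)$, hence every monomial of $\overline f$, and therefore of $\overline{fm}=\overline f\,\overline m$, has degree $|c|\ge dD/(d+e_k)$. Because each substituted covariant $\overline m_i$ vanishes at the origin — the $m_i$ are the covariant entries of the lift $\Theta'$, normalized so that $d\Theta'(0)=\id$ — the assignment $z_i\mapsto\overline m_i$ raises order, so a degree-$|c|$ monomial is sent to a sum of monomials each of degree $\ge|c|$ and hence of weight $\ge e_1|c|\ge e_1 dD/(d+e_k)$. For $D$ large this bound exceeds $e_k\ge e_j$, so every monomial of $\overline{fm}(\overline m_1,\dots,\overline m_k)$ has weight $\ge e_j$ and the corresponding element of $M$ is polynomial.

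The one genuinely delicate point is the weight drop under substitution just described; everything else is a counting argument. The device that resolves it is to trade the unavailable weight control for degree control: a homogeneous invariant of high degree necessarily has monomials involving many $z$-factors, and substituting covariants of positive order can only increase the number of factors, so the weight, being at least $e_1$ times the degree, stays large. This is precisely why the hypothesis that $f$ have \emph{sufficiently high} degree (rather than merely being divisible by $x^N$) is what is needed for the conclusion to survive the substitutions.
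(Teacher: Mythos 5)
Your preparatory bookkeeping is all correct: the uniform pole bound ($x^N g$ is polynomial for every $g\in M$, with $N=\lceil e_k/d\rceil$), the criterion that a weight-$e_j$ element of $M$ is polynomial exactly when every monomial of its restriction to $x=1$ has weight at least $e_j$, and the estimate $a\ge e_1D/(d+e_1)$ for the $x$-exponent of any monomial of a homogeneous invariant $f$ of degree $D$, which settles the first assertion that $fm$ is polynomial. The trouble is in the second assertion. Your degree-counting argument rests on the hypothesis that each substituted covariant $\overline m_i$ vanishes at the origin, which you justify by recalling where the $m_i$ come from in the proof of Lemma \ref{lem:lifting.Theta'}. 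That hypothesis is not part of the corollary: the substitutions of Lemma \ref{lem:M} allow arbitrary $m_i\in M$ of weights $e_1,\dots,e_k$, and such an element can have a nonzero constant term whenever $e_i\equiv 0\pmod d$ --- indeed the paper's own example lists $x^{-4}$, whose restriction to $x=1$ is the constant $1$, among the weight-$12$ generators of $M$. So, judged as a proof of the statement as written, your argument has a gap at exactly this point.

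The gap cannot be closed, because the statement as written (with ``any substitutions'' meaning arbitrary $m_i$ as in Lemma \ref{lem:M}) is false. In the paper's example ($d=3$; variables $y,z,w$ of weights $1,8,12$) take $m=x^{-4}\in M$ and $f=(x^4w)^c$: then $fm=x^{4c-4}w^c$ is polynomial for all $c\ge 1$, but the substitution $y\mapsto y$, $z\mapsto z$, $w\mapsto x^{-4}$ gives $(fm)(m_1,m_2,m_3)=x^{4c-4}(x^{-4})^c=x^{-4}$, which is not polynomial, no matter how large $c$ is. So some restriction on the substitutions is unavoidable, and the one you impose --- $\overline m_i(0)=0$ --- is precisely the property of the substitutions that actually occur later: the $z$-entries of $\Theta$, and of the de Jonqui\`ere maps $\Lambda$, have no constant term because $\Theta'$ preserves $W_0\times\{0\}$. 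With that hypothesis made explicit, your proof is correct and supplies the argument the paper leaves unwritten (the corollary is stated without proof). The only other way to salvage the literal wording is to parse $fm(m_1,\dots,m_k)$ as $f\cdot\bigl(m(m_1,\dots,m_k)\bigr)$, which follows at once from Lemma \ref{lem:M} and the pole bound since $x^N$ divides $f$; but that weaker statement does not yield the finiteness of bad monomials needed in the sequel. In short: your argument is the right one, but the origin-vanishing assumption must be promoted from silent context to explicit hypothesis, since without it the conclusion fails.
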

  	
\begin{example}	We have coordinate functions $(x,y,z,w)$ with weights $(-3,1,8,12)$. Mod 3 the weights on $(y,z,w)$-space are $(1,2,0)$. Thus, mod 3, the invariants are generated by $y^3$, $z^3$, $yz$ and $w$. The $G^0$-invariants are then generated by $xy^3$, $x^{8}z^3$, $x^3yz$ and $x^4w$. The isotropy groups of closed orbits are $G^0$, $\Z/3\Z$ and $\{e\}$.
	
	Now we look at the generators for the covariants for the $\Z/3\Z$-action on $(y,z,w)$-space. Those corresponding to the  representation where $\xi\in\Z/3\Z$ acts via multiplication   are $y$ and $z^2$. Those corresponding to the dual representation   are $y^2$ and $z$. Those corresponding to the trivial representation are $1$, $y^3$, $z^3$, $yz$ and $w$. Then taking into account the action of $G^0$ we see that $M$ is generated by the following covariants.
	
\begin{enumerate}
\item Weight 1: $y$ and $x^5z^2$.
\item Weight 8: $z$ and $x^{-2}y^2$.
\item Weight 12: $w$, $x^{-4}$, $x^{-3}y^3$, $x^4z^3$ and $x^{-1}yz$.
\end{enumerate}
The  elements of $M$ are closed under composition as per Lemma \ref{lem:M}.
\end{example}

 We call an element $m\in M$ \emph{good\/} if $m$ is polynomial and remains polynomial after any substitutions as in Lemma \ref{lem:M}, else we say that $m$ is \emph{bad}.  By Corollary \ref{cor:good.vars}, there are only finitely many homogeneous $m_i\in M$ which are bad.   We are going to modify $\Theta$ by a sequence of  meromorphic $G$-automorphisms $\Lambda$ of $V$ such that $\Lambda$ induces a strata preserving  automorphism   $\lambda\in \Aut(Z)$. 
 
 Let us assume that $z_1,\dots,z_\ell$ are the variables of $W_1$. Suppose that a monomial in the Taylor expansion of $z_1'$ (in the variables $z_1,\dots,z_n$) contains one of the variables  $z_1,\dots,z_\ell$. Then it is an invariant times the variable and is obviously good since invariants are preserved under composition as in Lemma \ref{lem:M}. Now we concentrate on the monomials which are not good.

Let $m_1,\dots,m_s$ be the bad monomials   of $\C[x,x\inv,z_1,\dots,z_n]$ of weight $e_1$.  The bad part of the Taylor series of  $z_j''$  is $\sum c_{ij} m_i$ for some $c_{ij}\in\O(W_0)$, $1\leq j\leq \ell$. We can assume that 
$$
(\sum_i c_{i1} m_{i },\dots,\sum_i c_{i\ell} m_i)\colon V\to  W_{1}
$$
 is $G$-equivariant since $\Theta$ is $G$-equivariant. Note that the monomials $m_i$ do not contain the variables $z_1,\dots,z_\ell$. Hence the $G$-automorphism (of de Jonqui\`ere type)
$$
\Lambda=(x,y_1,\dots,y_p,z_{1}-\sum_i c_{i1} m_i,\dots, z_\ell-\sum_i c_{i\ell} m_i, z_{\ell+1},\dots,z_n)
$$
  induces an automorphism  $\lambda$ of $Z$. The composition $\Lambda\circ\Theta$, by construction, has the property that the Taylor series of the entries $z_1'',\dots,z_\ell''$ contains only good terms. Now we can do the same thing to the Taylor series of $\Theta$ in the variables of $W_2$, etc. Hence there is a $\Lambda$ as claimed in Lemma \ref{lem:lifting.Theta'}.

  \bibliographystyle{amsalpha}
\bibliography{lin.paperbib}

 \end{document}